\newcommand{\bbar}[1]{{\overline{#1}}}
\newcommand{\F}{\mathbb F} 
\newcommand{\Gal}{{\mathrm{Gal}\, }} 
\newcommand{\GL}{{\mathbf{GL} }} 
\newcommand{\sepa}{{\mathrm{sep}\, }} 
\newcommand{\Map}{{\mathrm{Map}\, }}
\newcommand{\A}{\mc A} 
\newcommand{\bb}[1]{{\boldsymbol{#1}}} 
\renewcommand{\b}[1]{{\mathbf{#1}}} 
\newcommand{\G}{{\mathbf{G} }} 
\newcommand{\Sol}{\mathrm{Sol}} 
\newcommand{\im}{\mathrm{im}} 
\newcommand{\mc}[1]{{\mathcal{#1}}} 
\newtheorem{theorem}{Theorem}[section] 
\newtheorem{lemma}[theorem]{Lemma} 
\newtheorem{corollary}[theorem]{Corollary} 
\newtheorem{proposition}[theorem]{Proposition}
\theoremstyle{definition} 
\newtheorem{definition}{Definition} 
\newtheorem{remark}{Remark} 
\newtheorem{example}{Example}
\begin{document}

\title[Generic extensions and generic Polynomials]{Generic extensions and generic Polynomials for multiplicative groups}

\author{Jorge Morales} \address{Department of Mathematics, Louisiana State University, Baton Rouge, LA 70803, USA.} \email{morales@math.lsu.edu}

\author{Anthony Sanchez$^*$} \address{Department of Mathematics, Arizona State University, Tempe, AZ 85281, USA.} \email{anthony.sanchez.1@asu.edu} \thanks{$^*$Research conducted at the 2012 Louisiana State University Research Experience for Undergraduates (REU) site supported by the National Science Foundation REU Grant DMS -0648064}
\begin{abstract}
	Let $\A$ be a finite-dimensional algebra over a finite field $\F_q$ and let $G=\A^\times$ be the multiplicative group of $\A$. In this paper, we construct explicitly a generic Galois $G$-extension $S/R$, where $R$ is a localized polynomial ring over $\F_q$, and an explicit generic polynomial for $G$ in $\dim_{\F_q}(\A)$ parameters. 
\end{abstract}

\subjclass[2010]{12F12, 13B05} 
\maketitle

\section{Introduction}

An important and classical problem in Galois theory is to describe for a field $k$ and a finite group $G$ all Galois extensions $M/L$ with Galois group $G$, where $L$ is a field containing $k$. This can be done by means of a {\em generic polynomial}, that is a polynomial $f(Y;t_1,\ldots,t_m)$ with coefficients in the function field $k(t_1,\ldots,t_m)$ and Galois group $G$ such that every Galois $G$-extension $M/L$, with $L\supset k$, is the splitting field of $f(Y;\xi_1,\ldots,\xi_m)$ for a suitable $(\xi_1,\ldots,\xi_m)\in L^m$.

A related construction is that of {\em generic extensions} introduced by Saltman \cite{Saltman:1982uq}. These are Galois $G$-extensions of commutative rings $S/R$, where $R=k[t_1,\dots,t_m,1/d]$ and $d$ is a nonzero polynomial in $k[t_1,\dots,t_m]$, such that every Galois $G$-algebra $M/L$, where $L$ is a field containing $k$, is of the form $M\simeq S\otimes_\varphi L$ for a suitable homomorphism of $k$-algebras $\varphi:R\to L$.

Over an infinite ground field $k$, the existence of generic polynomials is equivalent to the existence of generic extensions as shown by Ledet \cite{Ledet:2000kx}, but the dictionary, at least in the direction $\{\mathrm{polynomials}\}\to \{\mathrm{extensions}\}$, is not straightforward. 

In this paper, we construct explicitly both a generic extension and a generic polynomial for groups of the form $G=\A^\times$, where $\A$ is a finite-dimensional $\F_q$-algebra and $k$ is an infinite field containing $\F_q$. Both constructions are based in the theory of Frobenius modules as developed by Matzat \cite{Matzat:2004kx}. An important ingredient is Matzat's ``lower bound'' theorem as formulated in \cite[Theorem 3.4]{Albert:2011vn} that we use to show that the extensions (respectively, polynomials) we construct have the required Galois group.

The number of parameters in our construction is not optimal. For example, if $\A=M_n(\F_q)$, then our method produces a polynomial in $n^2$ parameters, as opposed to the standard generic polynomial for $\GL_n(\F_q)$ that needs only $n$ parameters \cite{Abhyankar:2000ys}, \cite[Section 1.1]{Jensen:2002bh}. However, our method has the advantage of being general for all groups of the form $\A^\times$, where $\A$ is any finite-dimensional algebra over $\F_q$.

We are indebted to the referee for her/his pertinent and useful comments.

\tableofcontents

\section{Frobenius Modules} In this section we recall the basic theory and definitions relating to Frobenius modules for convenience of the reader. Most of the material in subsections \ref{SS:Pre} -- \ref{SS:galgroup} can be found in \cite[Part I]{Matzat:2004kx}, \cite{Albert:2011vn}. We include it here for the convenience of the reader. \\

\subsection{Preliminaries}\label{SS:Pre}

Let $K$ be a field containing the finite field $\F_q$ and let $\bbar{K}$ denote an algebraic closure of $K$. 
\begin{definition}
	\label{D:fmodule} A {\em Frobenius module} over $K$ is a pair $(M,\varphi)$ consisting of a finite-dimensional vector space $M$ over $K$ and a $\F_q$-linear map $\varphi:M\to M$ satisfying 
	\begin{enumerate}
		\item $\varphi(a x)=a^q \varphi(x)$ for $a\in K$ and $x\in M$. 
		\item The natural extension of $\varphi$ to $M\otimes_K \bbar{K}\to M\otimes_K \bbar{K}$ is injective\footnote{ Note that if $K$ is not perfect, the injectivity of $\varphi:M\to M$ does not imply condition (2) above. For example, if $a\in K\setminus K^q$, the map $\varphi:K^2\to K^2$ given by $\varphi(x,y)=(x^q-a y^q,0)$ is injective over $K$ but not over $\bbar K$. }. 
	\end{enumerate}
\end{definition}
\medskip

The {\em solution space} $\Sol^\varphi(M)$ of $(M,\varphi)$ is the set of fixed points of $\varphi$, i.e.
\[ \Sol^\varphi(M)=\{x\in M\ |\ \varphi(x)=x\}, \]
which is clearly a $\F_q$-subspace of $M$. 

Let $e_1,e_2,\ldots,e_n$ be a $K$-basis of $M$. Clearly $\varphi$ is completely determined by its values on this basis. Write

\[ \varphi(e_j)=\sum_{i=1}^n a_{ij} e_i, \]
where $a_{ij}\in K$ and let $A=(a_{ij})\in M_n(K)$. Identifying $M$ with $K^n$ via the choice of this basis, we have
\[ \varphi(X)=AX^{(q)}, \]
where $X=(x_1,\ldots,x_n)^T$ and $X^{(q)}=(x_1^q,\ldots,x_n^q)^T$. Condition (2) of Definition \ref{D:fmodule} ensures that $A$ is nonsingular. We shall denote by $(K^n,\varphi_A)$ the Frobenius module determined by a matrix $A\in \GL_n(K)$.

With the above notation, the solution space $\Sol^\varphi(M)$ is identified with the set of solutions in $K$ of the system of polynomial equations 
\begin{equation}
	\label{E:system} A X^{(q)} = X. 
\end{equation}

By the Lang-Steinberg theorem (Theorem \ref{T:lang}), there is a matrix $U=(u_{ij})\in \GL_n(\bbar{K})$ such that 
\begin{equation}
	\label{E:Lang} A=U (U^{(q)})^{-1}, 
\end{equation}
where $U^{(q)}=(u_{ij}^q)$. Thus, the change of variables $Y=U^{-1} X$ over $\bbar{K}$ yields the ``trivial'' system 
\begin{equation}
	\label{E:system2} Y^{(q )}=Y, 
\end{equation}
whose solutions are exactly the vectors in $\F_q^n\subset \bbar{K}^n$. We have proved: 
\begin{proposition}
	\label{P:dim} The columns of $U$ form a basis of $\Sol^\varphi(M\otimes_K \bbar{K})$ over $\F_q$. In particular
	\[ \dim_{\F_q}\Sol^\varphi(M\otimes_K \bbar{K})=n. \]
\end{proposition}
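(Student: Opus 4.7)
The plan is to follow through on the change of variables sketched just before the statement and verify that it converts the problem into the trivial system $Y^{(q)}=Y$, whose $\bbar K$-solutions are transparently described. The proof will have two essentially routine steps: first, showing that the columns of $U$ are $\F_q$-linearly independent in $\Sol^\varphi(M\otimes_K\bbar K)$, and second, showing that they span this solution space.

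Concretely, I would start from the Lang--Steinberg identity $A=U(U^{(q)})^{-1}$ and set $Y=U^{-1}X$ over $\bbar K$. Since $U\in\GL_n(\bbar K)$, the map $X\mapsto U^{-1}X$ is a $\bbar K$-linear isomorphism of $\bbar K^n$; but, more importantly, because the entries of $U$ lie in $\bbar K$ and not merely $\F_q$, one must check that it still restricts to an $\F_q$-linear bijection between the two solution sets. The verification is a direct computation: substituting $X=UY$ into $X=AX^{(q)}$ and using $U(U^{(q)})^{-1}U^{(q)}=U$ yields $UY=UY^{(q)}$, and cancelling $U$ (which is invertible) gives $Y=Y^{(q)}$. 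Conversely, any $Y\in\bbar K^n$ satisfying $Y^{(q)}=Y$ produces $X=UY\in\Sol^\varphi(M\otimes_K\bbar K)$.

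Next, I would invoke the standard fact that the equation $y^q=y$ has exactly the $q$ solutions in $\F_q$ in $\bbar K$, so the solution set of $Y^{(q)}=Y$ in $\bbar K^n$ is precisely $\F_q^n$, an $\F_q$-vector space of dimension $n$ with the standard basis $e_1,\dots,e_n$. Transporting via $X=UY$ then shows that $\Sol^\varphi(M\otimes_K\bbar K)=U\cdot\F_q^n$, and the images $Ue_1,\dots,Ue_n$ — which are exactly the columns of $U$ — form an $\F_q$-basis of this solution space. Both the claim about the basis and the dimension statement $\dim_{\F_q}\Sol^\varphi(M\otimes_K\bbar K)=n$ follow immediately.

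There is no real obstacle here beyond being careful about the ground field over which linearity is asserted: the change of variables is $\bbar K$-linear as a map of $\bbar K^n$, but the statement about the solution space is about its $\F_q$-linear structure. Once one keeps this distinction straight, the proof is a few lines of matrix manipulation together with the Lang--Steinberg input \eqref{E:Lang}.
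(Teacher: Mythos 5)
Your proposal is correct and follows essentially the same approach as the paper: both use the Lang--Steinberg factorization $A=U(U^{(q)})^{-1}$ and the change of variables $Y=U^{-1}X$ to reduce to the trivial system $Y^{(q)}=Y$ whose solution set is $\F_q^n$. Your version simply spells out the routine verification that the paper leaves implicit (that $X\mapsto U^{-1}X$ restricts to an $\F_q$-linear bijection between the two solution sets), which is a fine level of detail to add.
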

\subsection{Separability}

We shall now show that the solutions of \eqref{E:system} are in $K_{\sepa}^n$. See \cite[Theorem 1.1c]{Matzat:2004kx} for a different argument. 
\begin{proposition}
	\label{P:separability} Let $A\in \GL_n(K)$ and let ${\b x}_1,{\b x}_2,\ldots,{\b x}_n$ be indeterminates. Then the $K$-algebra
	\[ {\mc{F}}= K[{\b X}]/\langle A {\b X}^{(q)}-{\b X}\rangle, \]
	where ${\b X}=[{\b x}_1,{\b x}_2,\ldots,{\b x}_n]^T$ and $\langle A {\b X}^{(q)}-{\b X}\rangle$ is the ideal generated by the coordinates of $A {\b X}^{(q)}-{\b X}$, is {\'e}tale over $K$. 
\end{proposition}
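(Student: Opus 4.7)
The plan is to establish that $\mc{F} \otimes_K \bbar{K} \simeq \bbar{K}^{q^n}$ as $\bbar{K}$-algebras, which is equivalent to $\mc{F}$ being étale over $K$ of rank $q^n$. I would break this into three assertions: (i) $\dim_K \mc{F} \le q^n$; (ii) $\mc{F} \otimes_K \bbar{K}$ has at least $q^n$ distinct $\bbar{K}$-valued points; (iii) each of those points is reduced.

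For (i), since $A \in \GL_n(K)$, the ideal generated by the coordinates of $A \b{X}^{(q)} - \b{X}$ equals the ideal generated by the coordinates of $\b{X}^{(q)} - A^{-1} \b{X}$. Each generator expresses $x_i^q$ as a $K$-linear combination of $x_1, \ldots, x_n$, so by successively rewriting high powers we see that every class in $\mc{F}$ is a $K$-linear combination of the monomials $x_1^{e_1} \cdots x_n^{e_n}$ with $0 \le e_i < q$. This gives $\dim_K \mc{F} \le q^n$. For (ii), Proposition \ref{P:dim} identifies the $\bbar{K}$-points of $\mathrm{Spec}(\mc{F} \otimes_K \bbar{K})$ with $\Sol^\varphi(K^n \otimes_K \bbar{K})$, which has cardinality exactly $q^n$.

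The key step is (iii), and it is essentially automatic in positive characteristic: the Jacobian matrix of the system $A \b X^{(q)} - \b X$ with respect to $\b X$ is $-I_n$, because every partial derivative of $x_j^q$ vanishes. Hence at each of the $q^n$ closed points of $\mc{F} \otimes_K \bbar{K}$, the Jacobian has full rank $n$, so the local ring is regular of Krull dimension $0$, i.e.\ equal to $\bbar{K}$. Summing contributions over the $q^n$ points gives $\dim_{\bbar{K}}(\mc{F} \otimes_K \bbar{K}) \ge q^n$, which together with (i) forces equality and yields $\mc{F} \otimes_K \bbar{K} \simeq \bbar{K}^{q^n}$. Therefore $\mc{F}$ is étale over $K$. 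There is no real obstacle; everything rests on the observation that $\frac{\partial}{\partial x_j}(x_j^q) = 0$, which makes the Jacobian criterion trivial to apply.
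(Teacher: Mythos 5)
Your proof is correct, and it takes a genuinely different route from the paper. The paper makes the Lang--Steinberg change of variables $\b{X}=U\b{Y}$ (with $A=U(U^{(q)})^{-1}$), which transforms the defining ideal into $\langle \b{Y}^{(q)}-\b{Y}\rangle$ and thus exhibits $\mc{F}\otimes_K\bbar{K}$ directly as $\prod_{\F_q^n}\bbar{K}$; the whole argument is packaged into one coordinate change. You instead bound $\dim_K\mc{F}\le q^n$ by Gaussian-eliminating the relations $x_i^q=(A^{-1}\b{X})_i$, count $q^n$ geometric points via Proposition~\ref{P:dim}, and invoke the Jacobian criterion (Jacobian $=-I_n$ in characteristic $p$) to see each point is reduced. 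Your approach is more hands-on and makes the ``separability is automatic'' phenomenon transparent --- the Jacobian is $-I_n$ regardless of $A$ --- which is pedagogically appealing; the paper's change-of-variables argument is shorter but leans entirely on Lang--Steinberg.

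One small structural remark: step~(iii) is logically redundant in the way you have arranged things. Once you know there are $q^n$ distinct maximal ideals with residue field $\bbar{K}$ (step~(ii)), the Chinese Remainder Theorem already gives a surjection $\mc{F}\otimes_K\bbar{K}\twoheadrightarrow\bbar{K}^{q^n}$, so $\dim\ge q^n$; combined with (i) this forces the surjection to be an isomorphism and hence reducedness comes for free. Alternatively, (i)$+$(iii) alone suffice without (ii): (i) gives finiteness, (iii) gives smoothness of relative dimension $0$, and together these give \'etale --- you would then deduce, rather than assume, that the number of points is $q^n$. Either pruning tightens the argument, but as written nothing is wrong.
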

\begin{proof}
	Consider the change of variables ${\b Y}= U {\b X}$ over ${\bbar K}$, where $U$ is as in \eqref{E:Lang}. Then
	\[ 
	\begin{aligned}
		{\mc{F}}\otimes_K {\bbar K}&={\bbar K}[{\b Y}]/\langle {\b Y}^{(q)}-\b Y\rangle\\
		&\simeq \prod_{\F_q^n} {\bbar K}. 
	\end{aligned}
	\]
\end{proof}
\begin{corollary}
	\label{C:separability} The solutions of the system of polynomial equations $A X^{(q)} = X$ in $\bbar{K}^n$ lie in $K_\sepa^n$. In particular, the matrix $U$ of \eqref{E:Lang} is in $ \GL_n(K_\sepa)$. 
\end{corollary}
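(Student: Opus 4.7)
The plan is to deduce the corollary directly from Proposition \ref{P:separability} using the standard characterization of {\'e}tale $K$-algebras as finite products of finite separable field extensions of $K$. In particular, every $K$-algebra homomorphism from such an algebra into $\bbar{K}$ factors through $K_{\sepa}$. So the strategy is: translate ``solution of the system in $\bbar{K}^n$'' into ``$K$-algebra homomorphism $\mc{F}\to \bbar{K}$'' and then invoke the {\'e}tale property.

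More precisely, I would first observe that a tuple $\mathbf{a}=(a_1,\ldots,a_n)\in \bbar{K}^n$ satisfies $A\mathbf{a}^{(q)}=\mathbf{a}$ if and only if the assignment $\bb{x}_i\mapsto a_i$ defines a $K$-algebra homomorphism $\mc{F}\to \bbar{K}$, because the coordinates of $A\bb{X}^{(q)}-\bb{X}$ are exactly the generators of the ideal defining $\mc{F}$. Since Proposition \ref{P:separability} states that $\mc{F}$ is {\'e}tale over $K$, it decomposes as $\mc{F}\simeq \prod_{i} L_i$ with each $L_i/K$ a finite separable extension, and any $K$-algebra map $\mc{F}\to\bbar{K}$ factors through one of the projections $\mc{F}\to L_i\hookrightarrow K_{\sepa}$. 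Hence each $a_i$ lies in $K_{\sepa}$, proving the first assertion.

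For the statement about $U$, I would apply Proposition \ref{P:dim}: the columns of $U$ form a basis of $\Sol^\varphi(M\otimes_K\bbar{K})$, so each column of $U$ is a solution of $A\bb{X}^{(q)}=\bb{X}$ in $\bbar{K}^n$. By the first part, every entry of $U$ then lies in $K_{\sepa}$, giving $U\in \GL_n(K_{\sepa})$ (invertibility is already known over $\bbar{K}$, and remains valid over the subfield $K_{\sepa}$).

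I do not anticipate a real obstacle here, since the main work is already done in Proposition \ref{P:separability}. The only subtlety is the translation between ``solutions'' and ``algebra homomorphisms,'' and ensuring that one uses the correct characterization of {\'e}taleness, namely geometric reducedness plus finiteness, to conclude that the structure map of $\mc{F}$ lands in the separable closure rather than merely the algebraic closure.
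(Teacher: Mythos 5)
Your proposal is correct and follows the same route as the paper: identify solutions with $K$-algebra homomorphisms $\mc{F}\to\bbar{K}$, then use the \'etale property of $\mc{F}$ from Proposition \ref{P:separability} to conclude that their images lie in $K_\sepa$. The paper phrases the \'etale step via quotients of $\mc{F}$ rather than its product decomposition, but the content is identical; the additional appeal to Proposition \ref{P:dim} for the columns of $U$ is a sound way to make the second assertion explicit.
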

\begin{proof}
	The solutions of $A X^{(q)} = X$ are exactly the images of $\b X$ under $K$-algebra homomorphisms $\mc{F}\to \bbar K$. Since $\mc{F}/K$ is {\'etale}, so are all its quotients. This implies that the images of such homomorphisms are contained in $K_\sepa$. 
\end{proof}
\begin{definition}
	The {\em splitting field} $E$ of $(M,\varphi)$ is the subfield of $K_\sepa$ generated over $K$ by all the solutions of $AX^{(q)}=X$. 
\end{definition}
\begin{remark}
	The above definition does not depend on the choice of a basis of $M$ over $K$. 
\end{remark}
\begin{corollary}
	\label{C:splitting-field} The splitting field $E$ of $(M,\varphi)$ is a finite Galois extension of $K$ generated by the coefficients $u_{ij}$ of the matrix $U$ of \eqref{E:Lang}. 
\end{corollary}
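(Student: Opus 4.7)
The plan is to verify the three assertions in the corollary (that $E/K$ is finite, separable, normal, and equals $K(u_{ij})$) by exploiting the matrix $U$ from the Lang--Steinberg decomposition \eqref{E:Lang}.

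First, I would identify $E$ with $K(u_{ij})$. By Proposition \ref{P:dim}, every solution of $AX^{(q)}=X$ in $\bbar K^n$ is an $\F_q$-linear combination of the columns of $U$, hence lies in $K(u_{ij})^n$; conversely, each column of $U$ is itself a solution, so each $u_{ij}$ lies in $E$. This gives $E = K(u_{ij})$.

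Next, finiteness and separability are immediate from Corollary \ref{C:separability}: the $u_{ij}$ are elements of $K_\sepa$, hence algebraic and separable over $K$, and there are only finitely many of them, so $E/K$ is a finite separable extension.

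The only nontrivial point is normality. For any $\sigma \in \gal(K_\sepa/K)$, applying $\sigma$ entrywise to the identity $A = U(U^{(q)})^{-1}$ and using $A \in \GL_n(K)$ yields
\[
A = \sigma(U)\bigl(\sigma(U)^{(q)}\bigr)^{-1}.
\]
Thus $U$ and $\sigma(U)$ are two Lang--Steinberg representatives of the same matrix $A$. Setting $C := \sigma(U)^{-1} U$, a quick manipulation gives $C^{(q)} = \sigma(U^{(q)})^{-1} U^{(q)} = C$, so $C \in \GL_n(\F_q)$. Hence $\sigma(U) = U C^{-1}$, which shows that each $\sigma(u_{ij})$ is an $\F_q$-linear combination of the $u_{k\ell}$ and therefore lies in $E$. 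Since $E/K$ is finite and $\sigma(E) \subseteq E$, this proves $\sigma(E) = E$ for every $\sigma \in \gal(K_\sepa/K)$, so $E$ is Galois over $K$.

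The main obstacle is the normality argument, and its resolution is the uniqueness-up-to-$\GL_n(\F_q)$ of the Lang--Steinberg factorization of $A$; once that is in hand, the proof is a short computation.
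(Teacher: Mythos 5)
Your proof is correct and follows essentially the same route as the paper: finiteness and separability come from Proposition \ref{P:separability} (via Corollary \ref{C:separability}), and the identification $E=K(u_{ij})$ comes from Proposition \ref{P:dim}. For normality, the paper simply observes that a Galois conjugate of a solution of $AX^{(q)}=X$ is again a solution; your argument that $\sigma(U)=UC^{-1}$ with $C\in\GL_n(\F_q)$ (uniqueness of the Lang--Steinberg factorization up to right multiplication by $\GL_n(\F_q)$) is an explicit matrix restatement of the same fact and is equally valid.
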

\begin{proof}
	The extension $E/K$ is finite, separable by Proposition \ref{P:separability}. It is normal since a Galois conjugate of a solution $X$ of $A X^{(q)}=X$ is also a solution. Every solution $X$ of $A X^{(q)}=X$ is an $\F_q$-linear combination of the columns of $U$ by Proposition \ref{P:dim}, thus the coefficients $u_{ij}$ of $U$ generate $E$ over $K$. 
\end{proof}

\subsection{The Galois group of a Frobenius module}\label{SS:galgroup}

The Lang-Steinberg theorem (see \cite[Theorem 1]{Lang:1956ys} and \cite[Theorem 10.1]{Steinberg:1968aa}) plays an important role in the theory of Frobenius modules. 
\begin{theorem}
	[Lang-Steinberg]\label{T:lang} Let $\Gamma\subset \GL_n$ be a closed connected algebraic subgroup defined over $\F_q$ and let $A\in \Gamma(K)$. Then there exists $U\in \Gamma(\bbar{K})$ such that $U {(U^{(q)})}^{-1}=A$. 
\end{theorem}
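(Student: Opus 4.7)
The plan is the classical orbit-theoretic argument of Lang and Steinberg. Let $F\colon \GL_n \to \GL_n$ denote the $q$-power Frobenius $U\mapsto U^{(q)}$; since $\Gamma$ is defined over $\F_q$, $F$ restricts to a morphism $F\colon\Gamma\to\Gamma$. Consider the twisted conjugation action of $\Gamma(\bbar{K})$ on itself given by
\[ g\star x := g\cdot x\cdot F(g)^{-1}. \]
This is an algebraic group action, and the orbit of the identity $e$ is precisely the image of the Lang map $L(g):=gF(g)^{-1}$. So the theorem is equivalent to showing that $\star$ acts transitively on $\Gamma(\bbar{K})$.

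The key dimension count is that every $\star$-orbit has dimension $\dim\Gamma$. The stabilizer of $e$ is $\{g:g=F(g)\}=\Gamma(\F_q)$, which is finite. For an arbitrary point $x\in\Gamma(\bbar{K})$, the stabilizer is the fixed-point set of the twisted Frobenius $F_x\colon h\mapsto xF(h)x^{-1}$. Since $F$ has zero differential in characteristic $p$, so does $F_x$, and an easy induction shows that its $m$-fold iterate equals conjugation by some $N_m\in\Gamma(\bbar{K})$ followed by $F^m$. Its fixed points therefore embed into the finite set $\Gamma(\F_{q^m})$ (translated by $N_m$), so $\mathrm{Stab}(x)$ is zero-dimensional and every orbit has dimension $\dim\Gamma$.

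Next I would invoke the standard fact from the theory of algebraic group actions that orbits are locally closed, hence open in their Zariski closures. Because $\Gamma$ is a connected linear algebraic group it is irreducible, so an orbit of full dimension $\dim\Gamma$ has closure all of $\Gamma$ and is therefore open in $\Gamma$. The $\star$-orbits partition $\Gamma(\bbar{K})$ into disjoint nonempty open subvarieties, and connectedness of $\Gamma$ forces there to be exactly one. Applying this to the given $A\in\Gamma(K)\subset\Gamma(\bbar{K})$ yields $U\in\Gamma(\bbar{K})$ with $U\star e = A$, i.e.\ $UF(U)^{-1}=A$.

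I expect the main obstacle to be the finiteness of $\mathrm{Stab}(x)$ at a general point: the identity case is immediate, but the general case requires the iterate-and-reduce trick above, or the equivalent direct computation that $(dL)_g$ is right translation by $F(g)^{-1}$ and hence invertible, so that $L$ is étale everywhere. Once this is in hand, everything else is a routine appeal to Chevalley's theorem on constructible sets and the standard openness of orbits of maximal dimension (see, e.g., Borel's \emph{Linear Algebraic Groups}).
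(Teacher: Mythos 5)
The paper does not prove the Lang--Steinberg theorem; Theorem~\ref{T:lang} is stated as a black box with citations to Lang's 1956 paper and Steinberg's memoir, so there is no in-paper proof to compare yours against. That said, your proposal is a correct outline of the standard argument (Lang's original proof, as generalized by Steinberg to arbitrary connected groups).

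One step in your sketch is not quite right as written. The claim that the fixed points of $F_x\colon h\mapsto xF(h)x^{-1}$ embed into a translate of $\Gamma(\F_{q^m})$ via the identity $F_x^m = \mathrm{Int}(N_m)\circ F^m$ does not give finiteness directly: the equation $h = N_m F^m(h) N_m^{-1}$ cuts out another twisted-Frobenius stabilizer, not a finite set, and you would need a uniform $m$ of definition over all $h$ in $\mathrm{Stab}(x)$, which already presupposes finiteness. The clean route to finiteness of every stabilizer is the one you flag at the end: since $F$ has identically zero differential, the differential of $L_x(g)=gxF(g)^{-1}$ at any point is right translation by $xF(g)^{-1}$, hence invertible, so $L_x$ is \'etale with finite fibers; equivalently, $\mathrm{Lie}(\mathrm{Stab}(x)) = \ker(dF_x - \mathrm{id})_e = \ker(-\mathrm{id}) = 0$, and since algebraic groups over an algebraically closed field are smooth (Cartier), the stabilizer is zero-dimensional. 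With that substitution, the rest of your argument --- each orbit is the image of the irreducible variety $\Gamma$ under the orbit map, hence irreducible of dimension $\dim\Gamma$, open in its closure, so open in $\Gamma$; connectedness of $\Gamma$ forces a single orbit --- is the correct and standard conclusion.
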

\begin{remark}
	In fact, the element $U$ given in Theorem \ref{T:lang} lies in $\Gamma(K_\sepa)$ as discussed in Corollary \ref{C:separability}. 
\end{remark}

Next we state two theorems due to Matzat \cite{Matzat:2004kx} that play an important role in the determination of the Galois group of a Frobenius module. 
\begin{theorem}
	[``Upper Bound'' Theorem {\cite[Theorem 4.3]{Matzat:2004kx}}] \label{T:upperbound} 
	
	Let $\Gamma\subset \GL_n$ be a closed connected algebraic subgroup defined over $\F_q$ and let $A\in \Gamma(K)$. Let $E/K$ be the splitting field of the Frobenius module $(K^n,\varphi_A)$ defined by $A$ and let $U\in \Gamma(E)$ be an element given by the Lang-Steinberg theorem. Then the map
	\[ 
	\begin{aligned}
		\Gal(E/K)&\overset{\rho}{\longrightarrow} \Gamma(\F_q)\\
		\sigma &\longmapsto U^{-1}\sigma(U) 
	\end{aligned}
	\]
	is an injective group homomorphism. 
\end{theorem}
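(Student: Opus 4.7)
The plan is to verify three assertions: (i) the element $U^{-1}\sigma(U)$ actually lies in $\Gamma(\F_q)$ for every $\sigma\in\Gal(E/K)$, so that $\rho$ is well-defined; (ii) $\rho$ respects multiplication; and (iii) $\rho$ has trivial kernel. Since $\Gamma$ is defined over $\F_q$, a point of $\Gamma(\bbar{K})$ lies in $\Gamma(\F_q)$ if and only if its coordinates are fixed by the $q$-th power Frobenius, so (i) reduces to checking this Frobenius-invariance.

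For (i), I would first note that since $\sigma$ fixes $K$ and $\Gamma$ is defined over $K$, the element $\sigma(U)$ lies in $\Gamma(E)$, and applying $\sigma$ to the Lang-Steinberg relation $A=U(U^{(q)})^{-1}$ gives $A=\sigma(U)(\sigma(U)^{(q)})^{-1}$, where I use that $\sigma$ commutes with the entrywise $q$-power map. From $U^{(q)}=A^{-1}U$ and $\sigma(U)^{(q)}=A^{-1}\sigma(U)$, a short computation gives
\[
(U^{-1}\sigma(U))^{(q)}=(U^{(q)})^{-1}\sigma(U)^{(q)}=U^{-1}A\cdot A^{-1}\sigma(U)=U^{-1}\sigma(U),
\]
so $\rho(\sigma)\in \Gamma(\F_q)$ as desired.

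For (ii), the key observation is that once (i) is in hand, $\rho(\tau)\in \Gamma(\F_q)\subset \Gamma(K)$, so it is $\sigma$-invariant. Rearranging $\sigma(U^{-1}\tau(U))=U^{-1}\tau(U)$ yields $\sigma\tau(U)=\sigma(U)\rho(\tau)$, whence $\rho(\sigma\tau)=U^{-1}\sigma\tau(U)=\rho(\sigma)\rho(\tau)$. For (iii), if $\rho(\sigma)=I$ then $\sigma(U)=U$, so $\sigma$ fixes every entry $u_{ij}$; by Corollary \ref{C:splitting-field} these entries generate $E$ over $K$, forcing $\sigma=1$.

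The main technical point is step (i), specifically the two compatibilities it relies on: that the Frobenius endomorphism commutes with the Galois action (since both are $\F_q$-linear and act coordinatewise) and that the $\F_q$-points of the algebraic group $\Gamma$ coincide with the Frobenius-fixed $\bbar{K}$-points, which is where the hypothesis that $\Gamma$ is defined over $\F_q$ enters. Steps (ii) and (iii) are essentially formal manipulations once $\rho$ is known to land in $\Gamma(\F_q)$.
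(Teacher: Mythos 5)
The paper does not prove this theorem; it is quoted verbatim from Matzat \cite[Theorem~4.3]{Matzat:2004kx}, so there is no in-paper argument to compare against. Your proof is correct and is the standard argument one finds for this kind of ``Frobenius cocycle'' statement: the computation $(U^{-1}\sigma(U))^{(q)}=U^{-1}\sigma(U)$ using $U^{(q)}=A^{-1}U$ and $\sigma(A)=A$ gives well-definedness in $\Gamma(\F_q)$; the homomorphism property follows precisely because $\rho(\tau)$ has $\F_q$-entries and so is fixed by $\sigma$; and injectivity is correctly reduced to Corollary~\ref{C:splitting-field}, which says the entries of $U$ generate $E$ over $K$. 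One small remark on wording: in the closing paragraph you justify $\sigma(U^{(q)})=\sigma(U)^{(q)}$ by saying the two maps ``are $\F_q$-linear and act coordinatewise.'' The operative fact is simpler and not really about linearity: $\sigma$ is a ring homomorphism, so $\sigma(x^q)=\sigma(x)^q$ entrywise, and the entrywise $q$-power map is multiplicative on matrices because the ordinary Frobenius is additive in characteristic $p$. The computation you actually carry out uses exactly these facts, so this is a cosmetic issue, not a gap.
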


We state next Matzat's ``lower bound'' theorem in the particular case that we will use. See \cite[Theorem 3.4]{Albert:2011vn} and ensuing paragraph. 
\begin{theorem}
	[``Lower Bound'' Theorem]\label{T:lowerbound} Let $K=\F_q(\b{t})$ where $\b{t}=(t_1,\ldots,t_m)$ are indeterminates. Let $\Gamma\subset \GL_n$ be a closed connected algebraic subgroup defined over $\F_q$ and let $A\in \Gamma(K)$. Let $\rho :\Gal(E/K)\rightarrow \Gamma(\F_q)$ be the homomorphism of Theorem \ref{T:upperbound}. Then every specialization of $A$ in $\F_q$ is conjugate in $\Gamma(\bbar{\F}_q)$ to an element of $\im(\rho)$. 
\end{theorem}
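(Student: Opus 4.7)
The plan is to exhibit a specific element of $\im(\rho)$ that is $\Gamma(\bbar{\F}_q)$-conjugate to the specialization $A(\b{t}_0)$; the conjugator will be the reduction of $U$ itself modulo an appropriately chosen prime $\mathfrak{p}$ of the integral closure in $E$ of the local ring at $\b{t}_0$.

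First I would set $R=\F_q[\b{t}]_{(\b{t}-\b{t}_0)}$, so that $A\in \Gamma(R)$, and let $S$ be the integral closure of $R$ in the splitting field $E$. After choosing a maximal ideal $\mathfrak{p}$ of $S$, the residue field $\kappa=S/\mathfrak{p}$ is a finite extension of $\F_q$ (hence embeds into $\bbar{\F}_q$), and the decomposition group $D_\mathfrak{p}\subset\Gal(E/K)$ surjects onto $\Gal(\kappa/\F_q)$. Let $\phi\in D_\mathfrak{p}$ be a lift of the $q$-power Frobenius.

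The technical heart is to arrange that $U$ may be taken in $\Gamma(S_\mathfrak{p})$. Since any other Lang-Steinberg matrix has the form $U\gamma$ with $\gamma\in\Gamma(\F_q)$, which preserves $A=U(U^{(q)})^{-1}$ and merely conjugates $\rho$ by $\gamma^{-1}$, replacing $U$ by an appropriate translate leaves the statement unchanged. The Lang map $L:\Gamma\to\Gamma$, $V\mapsto V(V^{(q)})^{-1}$, is étale as a consequence of the smoothness of $\Gamma$ over the perfect field $\F_q$, so $L^{-1}(A)$ is a finite étale cover of $\mathrm{Spec}\,R$. A $\bbar{\F}_q$-point of this cover over $\b{t}_0$, produced by applying Lang-Steinberg over $\bbar{\F}_q$ to $A(\b{t}_0)$, lifts through Hensel's lemma to a point defined over a finite étale extension of $R$ that embeds into $S_\mathfrak{p}$, supplying the desired integral choice of $U$.

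With $U\in\Gamma(S_\mathfrak{p})$, I would reduce the defining identity $A=U(U^{(q)})^{-1}$ and the cocycle $\phi(U)=U\rho(\phi)$ modulo $\mathfrak{p}$. Writing $\bar{U}\in\Gamma(\kappa)$ for the reduction, and using that $\phi$ acts on $\kappa$ as $x\mapsto x^q$ so that $\overline{\phi(U)}=\bar{U}^{(q)}$, while $\rho(\phi)\in\Gamma(\F_q)$ is its own reduction, the reduced cocycle gives $\bar{U}^{(q)}=\bar{U}\,\rho(\phi)$. Substituting into the reduction of the defining identity yields
\[
A(\b{t}_0)=\bar{U}(\bar{U}^{(q)})^{-1}=\bar{U}\,\rho(\phi)^{-1}\bar{U}^{-1},
\]
exhibiting $A(\b{t}_0)$ as $\Gamma(\bbar{\F}_q)$-conjugate to $\rho(\phi^{-1})\in\im(\rho)$. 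The main obstacle is the integrality argument: without étaleness of the Lang map (which requires smoothness of $\Gamma$), one cannot specialize $U$, and the reduction modulo $\mathfrak{p}$ that drives the computation is unavailable.
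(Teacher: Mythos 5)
The paper does not prove this theorem; it cites it from Albert--Maier \cite{Albert:2011vn}. Your proof follows the same standard strategy as that reference (and as Matzat's original argument): reduce modulo a prime above the specialization point, lift the Frobenius of the residue field into the decomposition group, and reduce the cocycle identity $\sigma(U)=U\rho(\sigma)$ to exhibit the conjugacy. The computation at the end is correct. One remark: the \'etale/Hensel step you use to arrange $U\in\Gamma(S_{\mathfrak{p}})$ is more elaborate than necessary, and it is the point you yourself flag as the ``main obstacle.'' The paper's Proposition~\ref{P:integrality} already shows that the entries of $U$ are integral over $R$, so $U\in M_n(S)$ directly for $S$ the integral closure of $R$ in $E$; moreover $(\det U)^{q-1}=(\det A)^{-1}$ is a unit in $R$, so $\det U$ is a unit in $S$, whence $U\in\GL_n(S)$, and since $U\in\Gamma(E)$ and $\Gamma$ is closed in $\GL_n$ we get $U\in\Gamma(S)\subset\Gamma(S_{\mathfrak{p}})$ without invoking \'etaleness of the Lang map or any Henselian lifting. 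This removes the only delicate step in your argument and also dispenses with the need to verify smoothness of $\Gamma$. With that simplification, your proof is a clean and correct account of the cited result.
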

\subsection{Integrality}

In this subsection we discuss integrality properties of the solutions of the system $A X^{(q)}=X$. 
\begin{proposition}
	\label{P:integrality} Let $R$ be a Noetherian domain containing $\F_q$ with field of fractions $K$ and let $A\in \GL_n(R)$. Then the solutions of the system $AX^{(q)}=X$ have coordinates that are integral over $R$. 
\end{proposition}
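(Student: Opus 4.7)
The plan is to exploit the fact that $A\in\GL_n(R)$ means $A^{-1}$ also has entries in $R$, so the system $AX^{(q)}=X$ can be rewritten as $X^{(q)}=A^{-1}X$. Explicitly, if $A^{-1}=(b_{ij})\in M_n(R)$ and $\b x=(x_1,\ldots,x_n)$ is any solution (sitting, say, in $\bbar K$ or in some $R$-algebra extending $R$), then for each $i$,
\[ x_i^q = \sum_{j=1}^n b_{ij}\, x_j \in R + Rx_1 + \cdots + Rx_n. \]
Thus every $q$-th power of a coordinate is an $R$-linear combination of $1$ and the coordinates themselves.

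First I would introduce the $R$-subalgebra $S=R[x_1,\ldots,x_n]$ and aim to show it is finitely generated as an $R$-module; integrality of each $x_i$ is then automatic. The claim is that $S$ is spanned over $R$ by the finite set of monomials
\[ \{x_1^{e_1}\cdots x_n^{e_n}\ |\ 0\leq e_i\leq q-1\}. \]
To prove this, take any monomial with some $e_i\geq q$; using $x_i^q=\sum_j b_{ij} x_j$, replace $x_i^{e_i}=x_i^{e_i-q}\cdot x_i^q$, obtaining an $R$-linear combination of monomials each of total degree $\sum_k e_k - q + 1$. Since $q\geq 2$, the total degree strictly decreases, so iterating this reduction terminates in finitely many steps, producing an expression as an $R$-linear combination of monomials all of whose exponents lie in $\{0,1,\ldots,q-1\}$.

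Once $S$ is known to be finitely generated as an $R$-module, the standard equivalence (``integral $\Leftrightarrow$ contained in a finite $R$-submodule closed under multiplication by the element'') yields that each $x_i$, and indeed every element of $S$, is integral over $R$.

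The only subtlety worth flagging is the bookkeeping in the reduction step: when one replaces $x_i^q$, the exponents of the other variables may increase by $1$, so it is essential to track \emph{total} degree (which strictly drops by $q-1\geq 1$) rather than any single exponent. Beyond that, the argument is formal and uses neither the Noetherian hypothesis nor the separability results of the previous subsection; the hypothesis that $R$ is Noetherian is, however, convenient if one wishes to upgrade the statement (e.g.\ to assert that $S$ is a finite $R$-algebra inside a chosen ambient domain).
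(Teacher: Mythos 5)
Your proof is correct, and it takes a genuinely different route from the paper's. The paper defines $B_0=I$ and $B_k=(A^{-1})^{(q^{k-1})}B_{k-1}$, observes that $X^{(q^k)}=B_kX$, and uses the Noetherian hypothesis (ascending chain condition on the $R$-submodules of $M_n(R)$ spanned by $B_0,\ldots,B_k$) to produce a relation $B_k=\sum_{j<k}c_jB_j$ with $c_j\in R$; this yields the monic \emph{additive} polynomial $T^{q^k}-\sum_{j<k}c_jT^{q^j}\in R[T]$ vanishing on every coordinate of a solution. You instead rewrite the system as $X^{(q)}=A^{-1}X$ and show directly that $S=R[x_1,\ldots,x_n]$ is a finite $R$-module spanned by the monomials with all exponents $\le q-1$, using the strict drop in total degree (by $q-1\ge 1$) under the rewriting $x_i^q\mapsto\sum_j b_{ij}x_j$ to guarantee termination, and then invoke the standard module-finiteness criterion for integrality. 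Your argument is more elementary and, as you correctly flag, does not need $R$ to be Noetherian at all; that hypothesis is genuinely used in the paper's proof (and again later in Proposition~\ref{P:integral-galois}). What the paper's argument buys in exchange is the sharper conclusion that each coordinate is a root of a monic $q$-additive polynomial over $R$, which fits the additive-polynomial theme of the article, whereas your proof gives integrality without an explicit monic relation of that special form. Both establish the proposition as stated.
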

\begin{proof}
	Define recursively $B_0=I$, $B_k={(A^{-1})}^{(q^{k-1})} B_{k-1}$ for $k\ge 1$. Let $N_k$ the $R$-submodule of $M_n(R)$ generated by $B_0,B_1,\ldots, B_k$. Since $R$ is Noetherian, the ascending chain of submodules $\{N_k\}$ stabilizes, that is $N_{k-1}=N_k$ for $k$ large enough. For such a $k$ we have
	\[ B_k=\sum_{j=0}^{k-1} c_j B_j, \]
	where $c_j\in R$. Let $X\in K_\sepa^n$ be such that $AX^{(q)}=X$. It follows from the definition of the $B_j$'s that $X^{(q^j)}= B_j X$, thus
	\[ X^{(q^k)}= \sum_{j=0}^{k-1} c_j X^{(q^{j})}, \]
	which shows that the coordinates of $X$ are roots of the monic additive polynomial with coefficients in $R$
	\[ T^{q^k}-\sum_{j=0}^{k-1} c_j T^{q^{j}}. \]
\end{proof}
\begin{proposition}
	\label{P:integral-galois}
	
	Let $R$ be a Noetherian integrally closed domain containing $\F_q$ with field of fractions $K$ and let $A\in \GL_n(R)$. Let $U\in \GL_n(K_\sepa)$ be such that $A=U {(U^{(q)})}^{-1}$ and let $S=R[U]$ be the ring generated by the coefficients of $U$ over $R$. Then the ring extension $S/R$ is Galois with Galois group $G=\Gal(E/K)$, where $E=K[U]$. 
\end{proposition}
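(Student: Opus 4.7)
The plan is to verify that $S/R$ satisfies the three standard conditions for a Galois extension of commutative rings (in the sense of Chase-Harrison-Rosenberg): (i) $G$ acts on $S$ by $R$-algebra automorphisms, (ii) $S^G = R$, and (iii) $S$ is finite étale over $R$ of constant rank $|G|$. Conditions (i) and (ii) will be quick consequences of earlier results in the paper; the bulk of the work lies in (iii). Once all three are established, the standard criterion yields an isomorphism $S \otimes_R S \xrightarrow{\sim} \prod_G S$ given by $a \otimes b \mapsto (a\sigma(b))_{\sigma \in G}$, which may be verified by base change to $K_\sepa$, under which both sides become $\prod_{G \times G} K_\sepa$ with the map a permutation of coordinates.

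For (i), Theorem \ref{T:upperbound} gives $\sigma(U) = U g_\sigma$ with $g_\sigma \in \GL_n(\F_q)$ for each $\sigma \in G$, so $\sigma(u_{ij}) = \sum_k u_{ik}(g_\sigma)_{kj}$ lies in $S$; hence $G$ acts faithfully on $S$ by $R$-algebra automorphisms. For (ii), any element of $S^G$ lies in $E^G = K$ and is integral over $R$ by Proposition \ref{P:integrality} (since $S$ is generated over $R$ by the integral elements $u_{ij}$), hence lies in $R$ because $R$ is integrally closed.

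For (iii), I would introduce the auxiliary $R$-algebra
\[
\mc{T} := R[\b{U}, \det(\b{U})^{-1}]/\langle A\b{U}^{(q)} - \b{U}\rangle,
\]
where $\b{U} = (\b u_{ij})$ is an $n \times n$ matrix of indeterminates. Computing the Jacobian of the $n^2$ defining relations with respect to the $n^2$ variables $\b u_{ij}$ yields $-I_{n^2}$ (the $q$-th-power terms have zero derivative in characteristic $p$), so $\mc{T}$ is a finite étale $R$-algebra. The $R$-algebra map $\mc{T} \to S$ sending $\b U \mapsto U$ is well defined because $\det(U)^{q-1} = \det(A)^{-1} \in R^\times$ makes $\det(U)$ a unit in $S$, and it is surjective by construction of $S$. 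Since $\mc{T}$ is étale over the normal Noetherian domain $R$, it is itself normal and hence decomposes as a finite product $\prod_i B_i$ of normal integral domains, each étale over $R$. The surjection $\mc T \twoheadrightarrow S$ factors through one component $B$ (because $S$ is a domain); and since both $B$ and $S$ are $R$-subalgebras of $E$ with fraction field $E$ (the latter using $S \otimes_R K = K[U] = E$), the kernel of $B \twoheadrightarrow S$ is a prime ideal of the $R$-torsion-free domain $B$ that vanishes in the generic fiber, and is therefore zero. Thus $S \simeq B$ is étale over $R$ of constant rank $[E:K] = |G|$.

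The main obstacle is step (iii): a priori $S$ is merely a quotient of the étale algebra $\mc T$, and quotients of étale algebras are generally not étale, so we are forced to identify the quotient with a direct factor of $\mc T$. The crucial input is that the component $B$ and $S$ share the fraction field $E$, which together with $R$-torsion-freeness of $B$ forces $B \simeq S$.
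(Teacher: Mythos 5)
Your proof takes a genuinely different route from the paper's. The paper verifies the ramification condition of Chase--Harrison--Rosenberg directly at each maximal ideal $\mathfrak{m}\subset S$: it observes that $S/\mathfrak{m}$ is again the splitting field of a Frobenius module over $R/\mathfrak{m}_0$ and shows the decomposition map $G_\mathfrak{m}\to\Gal(\ell/k)$ is injective by factoring the upper-bound representation $\rho$ through it. You instead globalize: you exhibit $S$ as a direct factor of a finite \'etale $R$-algebra $\mc T$ cut out by the Frobenius-module equations (\'etaleness coming from the Jacobian being $-I_{n^2}$ in characteristic $p$), and then recover the Galois property from \'etaleness plus $S^G=R$ plus the rank count. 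Your $\mc T$ is essentially the $\mc S$ the paper introduces a few paragraphs later when decomposing the Galois $\GL_n(\F_q)$-algebra, so your proof anticipates that construction and uses it to prove this proposition directly. Each method has advantages: the paper's argument is more elementary (only the upper-bound theorem and basic commutative algebra), while yours is more structural and makes the \'etale locus explicit.

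Two places in your write-up need tightening. First, the Jacobian computation shows $R[\b U]/\langle A\b U^{(q)}-\b U\rangle$ is \emph{\'etale}, but \'etale does not give \emph{finite}; you also need module-finiteness of $\mc T$ over $R$. This follows, for instance, by noting that the defining equations force $\det(\b U)\,(\det(A)\det(\b U)^{q-1}-1)=0$, so inverting $\det(\b U)$ picks out a direct factor of $R[\b U]/\langle A\b U^{(q)}-\b U\rangle$ on which each $\b u_{ij}$ is integral over $R$ by the argument of Proposition \ref{P:integrality}. Second, the closing remark that the isomorphism $S\otimes_R S\to\prod_G S$ ``may be verified by base change to $K_\sepa$'' is too quick: base change along $R\to K_\sepa$ only controls the generic fiber, and a map of finite projective $R$-modules that is an isomorphism generically need not be one globally. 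The fix is exactly what the rest of your argument supplies: since $h$ is a homomorphism of finite \'etale $S$-algebras of equal rank, and $S\otimes_R S$ is $R$-torsion-free, the generic-fiber isomorphism forces $h$ to be injective, and an injective map of finite \'etale $S$-algebras of the same rank is surjective, hence an isomorphism. With these two points supplied, the argument is complete and correct.
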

\begin{proof}
	Let $\rho: G\to \GL_n(\F_q)$ be the homomorphism $\rho(\sigma)=U^{-1}\sigma(U)$ of Theorem \ref{T:upperbound}. Then $\sigma(U)=U\rho(\sigma)$, so $S=R[U]$ is preserved by $G$. By Proposition \ref {P:integrality}, the ring $S$ is integral over $R$ and, since $R$ is assumed to be integrally closed, we must have $S^G=R$. It remains to show that $S/R$ is unramified at maximal ideals.
	
	Let $\mathfrak{m}\subset S$ be a maximal ideal and let $\mathfrak{m}_0=R\cap \mathfrak{m}$. Let $\ell=S/\mathfrak{m}$ and $k=R/\mathfrak{m}_0$. Notice that since $S/R$ is integral, the ideal $\mathfrak{m}_0$ is also maximal \cite[Corollary 5.8]{Atiyah:1969aa}. Clearly $\ell=k[\bbar U]$ is the splitting field of the system $\bbar{A} \b{X}^{(q)}=\b{X}$ over $k$, where $\bbar{A}$ is the class of $A$ modulo $\mathfrak{m}_0$. Hence $\ell/k$ is Galois. Let $G_\mathfrak{m}\subset G$ be the stabilizer of $\mathfrak{m}$. Each $\sigma\in G_\mathfrak{m}$ induces an automorphism $\bbar{\sigma}$ of $\ell/k$; we have a canonical homomorphism
	
	\[ 
	\begin{aligned}
		G_\mathfrak{m}&\overset{\pi}{\longrightarrow} \Gal(\ell/k)\\
		\sigma &\longmapsto \bbar{\sigma}. 
	\end{aligned}
	\]
	We need to verify that the map $\pi$ above is injective. Indeed, let $\bbar{\rho}: \Gal(\ell/k)\to\GL_n(\F_q)$ be the map given by $\bbar{\rho}(\tau)=\bbar{U}^{-1}\tau(\bbar{U})$. We verify immediately that the following diagram is commutative
	\[ 
	\begin{diagram}
		G_\mathfrak{m}&\rTo^{\rho} &\GL_n(\F_q)\\
		\dTo^{\pi} &\ruTo_{\bbar{\rho}} &\\
		\Gal(\ell/k) & &. 
	\end{diagram}
	\]
	Since $\rho$ is injective by Theorem \ref{T:upperbound}, we conclude that so is $\pi$. 
\end{proof}

\subsection{Description of the splitting field}

Let $K$ be a field containing $\F_q$ and let $A\in \GL_n(K)$.

Let $\b{U}=(\b{u_{ij}})$, where the $\b{u_{ij}}$'s ($i,j=1,\ldots,n$) are indeterminates. Let $d=\det(A)$ and let $J\subset K[\b{U}]$ be the ideal
\[ J= \langle A\b{U}^{(q)}-\b{U},\det(\b{U})^{(q-1)}d-1\rangle. \]
\begin{proposition}
	\label{P:galois-algebra} The $K$-algebra
	\[ \mc{E}=K[\b{U}]/J \]
	is a Galois $\GL_n(\F_q)$-algebra over $K$. Its indecomposable factors are isomorphic to the splitting field $E$ of the Frobenius module $(K^n,\Phi_A)$. 
\end{proposition}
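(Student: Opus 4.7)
The plan is to extract the structure of $\mc{E}$ by base-changing to $K_\sepa$ and diagonalizing the defining relations via the Lang--Steinberg element $U$ of \eqref{E:Lang}. This should reduce everything to the constant system $\b{V}^{(q)}=\b{V}$, whose coordinate ring is manifestly the ring of maps from a finite set to $K_\sepa$.

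Concretely, pick $U\in\GL_n(K_\sepa)$ with $A=U(U^{(q)})^{-1}$ (Corollary \ref{C:separability}) and substitute $\b{U}=U\b{V}$, i.e.\ $\b{V}=U^{-1}\b{U}$. Then $A\b{U}^{(q)}=U(U^{(q)})^{-1}U^{(q)}\b{V}^{(q)}=U\b{V}^{(q)}$, so the relation $A\b{U}^{(q)}=\b{U}$ becomes $\b{V}^{(q)}=\b{V}$, meaning each entry of $\b{V}$ lies in $\F_q$. A direct computation shows $\det(U)^{q-1}d=1$, so the relation $\det(\b{U})^{q-1}d=1$ translates into $\det(\b{V})^{q-1}=1$, i.e.\ $\det(\b{V})\neq 0$. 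Therefore
\[ \mc{E}\otimes_K K_\sepa \;\cong\; K_\sepa[\b{V}]\big/\langle \b{V}^{(q)}-\b{V},\ \det(\b{V})^{q-1}-1\rangle \;\cong\; \Map(\GL_n(\F_q),K_\sepa), \]
the last isomorphism being the standard identification of the coordinate ring of the constant $\F_q$-scheme $\GL_n(\F_q)$ after base change. This already proves that $\mc{E}$ is étale of rank $|\GL_n(\F_q)|$ over $K$.

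For the $G$-algebra structure, let $G=\GL_n(\F_q)$ act on $\mc{E}$ by the $K$-algebra automorphisms $\b{U}\mapsto \b{U}C$ for $C\in G$ (this preserves both defining relations since $C$ has entries in $\F_q$ and is invertible). Via the isomorphism above, this corresponds to the action of $G$ on $\Map(G,K_\sepa)$ by right translation on the source, which is the standard Galois $G$-algebra action. Hence $\mc{E}$ is a Galois $G$-algebra over $K$ by the usual descent criterion ($\mc{E}\otimes_K K_\sepa \cong \Map(G,K_\sepa)$ equivariantly).

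Finally, the indecomposable factors: the $K_\sepa$-points of $\mc{E}$ are precisely the maps $\b{U}\mapsto UC$ for $C\in G$, and two such points lie over the same indecomposable factor of $\mc{E}$ iff they are conjugate under $\Gal(K_\sepa/K)$. The image of the point $\b{U}\mapsto UC$ is the $K$-subalgebra $K[UC]=K[U]=E$ (since $C\in\GL_n(\F_q)\subset\GL_n(K)$), so every residue field of $\mc{E}$ at a maximal ideal equals $E$. Equivalently, the $\Gal(K_\sepa/K)$-orbits on $G$ under the homomorphism $\rho$ of Theorem \ref{T:upperbound} are all of the same size $[E:K]=|\rho(\Gal(E/K))|$, so $\mc{E}\cong E^{[G:\rho(\Gal(E/K))]}$ as $K$-algebras. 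The main subtlety here is keeping the change-of-variable bookkeeping straight, in particular verifying $\det(U)^{q-1}d=1$ and checking that the $G$-action defined on the uncomputed side matches right translation after the substitution; once those are in hand, the rest is formal.
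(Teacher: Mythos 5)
Your proof is correct and follows essentially the same route as the paper: base-change to $K_\sepa$, use the Lang--Steinberg element $U$ to change variables to $\b{V}=U^{-1}\b{U}$, recognize $\mc{E}\otimes_K K_\sepa\simeq\Map(\GL_n(\F_q),K_\sepa)$ with the right-translation action, and identify the residue fields at the points $\b{U}\mapsto UC$ with $K[U]=E$. The only cosmetic difference is that the paper makes the Galois-algebra verification explicit via Lagrange interpolation idempotents $e_b(\b{U})=f_b(U^{-1}\b{U})$ and the simply transitive action $a\,e_b=e_{ba^{-1}}$, whereas you appeal directly to the descent characterization of Galois $G$-algebras; both are standard and amount to the same computation.
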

\begin{proof}
	Let $U\in \GL_n(K_\sepa )$ be such that $A=U{U^{(q)}}^{-1}$ and let $\b{W}=U^{-1} \b{U}$. Then, as in Proposition \ref{P:separability}, we have
	\[ 
	\begin{aligned}
		{\mc{E}}\otimes_K K_\sepa&=K_\sepa[\b{W}]/\langle \b{W}^{(q)}-\b{W}, \det(\b{W})^{q-1}-1 \rangle\\
		&\simeq \prod_{\GL_n(\F_q)} {K_\sepa}. 
	\end{aligned}
	\]
	Thus $\mc{E}$ is {\'e}tale. The action of $\GL_n(\F_q)$ on $\mc{E}$ is given by $\b{U}\mapsto \b{U} a$ for $a\in \GL_n(\F_q)$. The primitive idempotents of ${\mc{E}}\otimes_K K_\sepa$ are represented by $e_b(\b{U})=f_b(U^{-1} \b{U})$, where $b\in \GL_n(\F_q)$ and $f_b(\b{W})\in \F_q[W]$ is the Lagrange interpolation polynomial such that $f_b(w)=\delta_{b,w}$ for $w\in \GL_n(\F_q)$. We see easily that $a e_b=e_{ba^{-1}}$, so $\GL_n(\F_q)$ acts simply transitively on the set of primitive idempotents of ${\mc{E}}\otimes_K K_\sepa$. Thus ${\mc{E}}$ is a Galois $\GL_n(\F_q)$-algebra.
	
	The indecomposable factors of $\mc{E}$ are precisely the images of $K$-algebra homomorphisms $\mc{E}\to K_\sepa$. If $\varphi: \mc{E}\to K_\sepa$ is such a homomorphism, then the columns of $U=\varphi(\b{U})$ form a $\F_q$-basis of the space of solutions of the system $A X^{(q)}=X$. Thus $E=\varphi(\mc{E})$. 
\end{proof}
Let $\{\epsilon_1,\epsilon_2,\ldots,\epsilon_h\}$ be the set of primitive idempotents of $\mc{E}$. The group $\GL_n(\F_q)$ acts on this set transitively and each subalgebra $\mc{E}\epsilon_i$ (with identity $\epsilon_i)$ is isomorphic to $E$ by Proposition \ref{P:galois-algebra}. 
\begin{proposition}
	Let $R$ be an integrally closed Noetherian domain with field of fractions $K$ and let $\mc{S}=R[\b{U}]/J_0$, where $J_0=J\cap R[\b{U}]$. Assume $A\in \GL_n(R)$. Then each primitive idempotent $\epsilon_i$ of $\mc{E}$ lies in $\mc S$. In particular, we have a decomposition 
	\begin{equation}
		\label{E:decomposition} \mc{S}=\sum_{i=1}^h \mc{S}\epsilon_i. 
	\end{equation}
\end{proposition}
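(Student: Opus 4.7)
The plan is to show that $\mc{S}$ is a finite \'etale $R$-algebra; from this, the decomposition will follow from standard properties of \'etale covers of normal domains. \emph{Finiteness} follows by applying Proposition \ref{P:integrality} to the ``universal'' solution $\b{U}$ of $A\b{X}^{(q)}=\b{X}$ in $\mc{S}$: each entry $\b{u_{ij}}$ is annihilated by the monic additive polynomial over $R$ constructed in that proof, so $\mc{S}$ is a finite $R$-module.

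For \emph{\'etaleness}, I would introduce the auxiliary algebra
\[ \tilde{\mc{S}}\ :=\ R[\b{U}]/\langle g_{ij},\ d\det(\b{U})^{q-1}-1\rangle, \]
where the $g_{ij}$ are the entries of $A\b{U}^{(q)}-\b{U}$ and $d=\det(A)\in R^\times$. Equivalently, $\tilde{\mc{S}}=R[\b{U}][\det(\b{U})^{-1}]/\langle g_{ij}\rangle$, since the relation $d\det(\b{U})^{q-1}-1$ is automatic after inverting $\det(\b{U})$ (take determinants of $A\b{U}^{(q)}=\b{U}$). Because $q$ vanishes in $R$, one computes $\partial g_{ij}/\partial \b{u_{kl}}=-\delta_{ik}\delta_{jl}$, so the Jacobian determinant of the $g_{ij}$'s is $\pm 1$; the Jacobian criterion shows $R[\b{U}]/\langle g_{ij}\rangle$ is \'etale over $R$, and localizing at $\det(\b{U})$ preserves \'etaleness, so $\tilde{\mc{S}}$ is \'etale over $R$. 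The defining relations of $\tilde{\mc{S}}$ all lie in $J_0$, which yields a surjection $\tilde{\mc{S}}\twoheadrightarrow \mc{S}$; flatness of $\tilde{\mc{S}}/R$ gives an embedding $\tilde{\mc{S}}\hookrightarrow \tilde{\mc{S}}\otimes_R K=\mc{E}$ that factors through $\mc{S}$, forcing the surjection also to be injective. Hence $\mc{S}\cong \tilde{\mc{S}}$ is \'etale over $R$.

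\emph{Conclusion.} Since \'etale morphisms preserve normality, $\mc{S}$ is a Noetherian normal ring, hence decomposes as a finite product $\mc{S}=\prod_{j}\mc{S}_j$ of normal domains. Each factor $\mc{S}_j\otimes_R K$ is a finite separable field extension of $K$ (being a finite \'etale $K$-algebra and a domain), so the identity $e_j$ of $\mc{S}_j$ is a primitive idempotent of $\mc{E}$; the $e_j$ form a complete set of orthogonal idempotents summing to $1_\mc{E}$, so by uniqueness of the primitive decomposition $\{e_j\}=\{\epsilon_1,\ldots,\epsilon_h\}$. In particular each $\epsilon_i\in \mc{S}$, and the decomposition $\mc{S}=\sum_{i=1}^{h}\mc{S}\epsilon_i$ follows. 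The main obstacle is the \'etaleness step, especially the identification of $\mc{S}$ with the concrete presentation $\tilde{\mc{S}}$ through the flatness argument; everything else is routine once \'etaleness over the normal base $R$ is in place.
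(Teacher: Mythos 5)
Your proof is correct, but it takes a genuinely different route from the paper. The paper's argument is explicit and computational: it writes each absolutely primitive idempotent as $e_a(\b{U})=f_a(U^{-1}\b{U})$, where $f_a$ is a Lagrange interpolation polynomial on $\GL_n(\F_q)$ and $U\in\GL_n(K_\sepa)$ satisfies $A=U(U^{(q)})^{-1}$; since the entries of $U$ and $U^{-1}$ are integral over $R$ (Proposition \ref{P:integrality}), the coefficients of $\epsilon_1=\sum_{a\in G}e_a$ as a polynomial in the $\b{u}_{ij}$ are integral over $R$ and hence lie in $R$ because $R$ is integrally closed. Your approach instead establishes the structural fact that $\mc{S}/R$ is finite \'etale --- via the Jacobian criterion (exploiting that $\partial \b{u}_{kl}^q/\partial \b{u}_{kl}=0$ in characteristic $p$), the observation that the relation $d\det(\b{U})^{q-1}=1$ is forced by inverting $\det(\b{U})$, and the flatness argument identifying $\mc{S}$ with the explicit presentation $\tilde{\mc{S}}$ --- and then appeals to the fact that a finite \'etale algebra over a Noetherian normal domain is a finite product of normal domains, whose identity elements are exactly the primitive idempotents of the generic fiber. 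Both arguments are sound. The paper's is more elementary and self-contained (reusing machinery already set up in Propositions \ref{P:integrality} and \ref{P:galois-algebra}), while yours is cleaner conceptually and gives the stronger, intrinsically useful byproduct that $\mc{S}/R$ is \'etale, which the paper does not state explicitly even though it is essentially implied by the later Galois result; the cost is reliance on heavier commutative algebra (Jacobian criterion over a base ring, \'etale morphisms preserve normality, and the decomposition of a Noetherian normal ring into normal domains).
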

\begin{proof}
	It is enough to prove that $\epsilon_1\in \mc{S}$. Let $G$ be the stabilizer of $\epsilon_1$ in $\GL_n(\F_q)$. Then
	\[ \epsilon_1=\sum_{a\in G} e_a, \]
	where the $e_a\in \mc{E}\otimes K_\sepa$ are absolutely primitive idempotents. As we have seen in the proof of Proposition \ref{S:generic-pols}, we have $e_a(\b{U})=f_b(U^{-1}\b{U})$, where $f_a(\b{W})\in\F_q[\b{W}]$ is the Lagrange interpolation polynomial such that $f_a(w)=\delta_{a,w}$ for $w\in \GL_n(\F_q)$, where $\delta$ is the Dirichlet symbol. Since the entries of $U$ (and $U^{-1}$) are integral over $R$ by Proposition \ref{P:integrality}, we conclude that the coefficients of $e_a(\b{U})$, as polynomial in the variables $\b{u}_{ij}$, are integral over $R$. It follows that $\epsilon_1\in K[\b{U}]$ has coefficients integral over $R$. Since $R$ is integrally closed by hypothesis, we have $\epsilon_1\in R[\b{U}]$. 
\end{proof}
\begin{corollary}
	The ring extension $\mc{S}/R$ is Galois with group $\GL_n(\F_q)$. 
\end{corollary}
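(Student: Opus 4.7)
The plan is to combine the decomposition \eqref{E:decomposition} just established with the single-factor case already handled in Proposition \ref{P:integral-galois}. The group $\GL_n(\F_q)$ acts on $R[\b{U}]$ by $\b{U}\mapsto\b{U}a$; since $a$ has entries in $\F_q$, this action preserves $R[\b{U}]$ and (because it preserves $J$) the ideal $J_0=J\cap R[\b{U}]$, inducing an action on $\mc{S}$ that agrees, after extending scalars to $K$, with the action on $\mc{E}$ of Proposition \ref{P:galois-algebra}. That proposition shows $\GL_n(\F_q)$ permutes the $\epsilon_i$ transitively; I let $H$ be the stabilizer of $\epsilon_1$, so $[\GL_n(\F_q):H]=h$ and $\mc{S}\epsilon_1$ is $H$-stable.

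Next I would analyze one factor. Fixing an embedding $\mc{E}\epsilon_1\hookrightarrow K_\sepa$, the image of $\b{U}\epsilon_1$ is a matrix $U\in\GL_n(K_\sepa)$ with $A=U(U^{(q)})^{-1}$, and one checks that $\mc{S}\epsilon_1$ is carried isomorphically onto $R[U]$, which is exactly the ring $S$ of Proposition \ref{P:integral-galois}. Under this identification, the map $\rho$ of Theorem \ref{T:upperbound} identifies $H$ with $\Gal(E/K)$ and the inherited $H$-action on $\mc{S}\epsilon_1$ becomes the Galois action on $R[U]$. Proposition \ref{P:integral-galois} then asserts that $\mc{S}\epsilon_1/R$ is Galois with group $H$.

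Finally I would piece the factors together. Choosing coset representatives $g_1,\dots,g_h$ for $\GL_n(\F_q)/H$ gives $\mc{S}\epsilon_i=g_i(\mc{S}\epsilon_1)$, so $\mc{S}$ is the induced extension of $\mc{S}\epsilon_1$ from $H$ to $\GL_n(\F_q)$; the standard principle that the induction of an $H$-Galois extension of $R$ is a $\GL_n(\F_q)$-Galois extension of $R$ then closes the argument. For a self-contained verification one can check the Galois criterion directly: the fixed ring $\mc{S}^{\GL_n(\F_q)}=R$ follows from $\mc{E}^{\GL_n(\F_q)}=K$ combined with the integrality of $\mc{S}/R$ and the integral closedness of $R$, while the Galois isomorphism $\mc{S}\otimes_R\mc{S}\simeq\prod_{a\in\GL_n(\F_q)}\mc{S}$ can be obtained by introducing $\b{W}=\b{U}^{-1}\b{V}$ in a second copy of $\mc{S}$: the relations force $\b{W}\in\GL_n(\F_q)$, and the Chinese Remainder Theorem decomposes the tensor product as a product of copies of $\mc{S}$ indexed by $\GL_n(\F_q)$.

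The main obstacle I expect is the bookkeeping: identifying $H$ with $\Gal(E/K)$ via $\rho$ and verifying that the $H$-action on $\mc{S}\epsilon_1$ inherited from $\GL_n(\F_q)$ coincides with the Galois action used in Proposition \ref{P:integral-galois}. Once these compatibilities are in place, the conclusion is essentially formal.
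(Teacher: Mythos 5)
Your proposal matches the paper's proof: both combine the decomposition \eqref{E:decomposition} with Proposition \ref{P:integral-galois} by identifying a single factor $\mc{S}\epsilon$ with $R[U]$, taking $H$ (or $G$) to be the stabilizer of the chosen idempotent, and invoking induction of Galois extensions (the paper writes this as $\mc{S}\simeq\Map_H(\GL_n(\F_q),\mc{S}\epsilon)$). Your additional direct verification via the tensor-product criterion is not in the paper but is a valid alternative.
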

\begin{proof}
	Let $\epsilon\in\mc{S}$ be a primitive idempotent and let $G$ be the stabilizer of $\epsilon$ in $\GL_n(\F_q)$ and let $S=\mc{S}\epsilon$. From the decomposition \eqref{E:decomposition} we have
	\[ \mc{S}\simeq\Map_G(\GL_n(\F_q), S), \]
	where $\Map_G(\GL_n(\F_q), S)$ is the set of $G$-equivariant maps $\GL_n(\F_q)\to S$ and $(a\alpha)(x)=\alpha(xa)$ for $a\in \GL_n(\F_q)$ and $\alpha\in\Map_G(\GL_n(\F_q)$. Since $S/R$ is $G$-Galois by Proposition \ref{P:integral-galois}, we conclude that $\mc{S}/R$ is $\GL_n(\F_q)$-Galois. 
\end{proof}

\section{Generic extensions for multiplicative groups}\label{S:generic-pols}

Let $k$ be a field and let $G$ be a finite group. Let let $R=k[\b{t},1/d]$, where $\b{t}=(t_1,\ldots,t_m)$ are indeterminates and $d$ is a nonzero polynomial in $k[\b{t}]$. The following definition is due to Saltman \cite{Saltman:1982uq}. 
\begin{definition}
	A Galois $G$-extension $S/R$ of commutative rings is called {\em $G$-generic over $k$} if for every Galois $G$-algebra $M/L$, where $L$ is a field containing $k$, there exists a homomorphism of $k$-algebras $\varphi: R\to L$ such that $S\otimes_\varphi L \simeq M$ as $G$-algebras over $L$. 
\end{definition}

In this section $\A\subset M_n(\F_q)$ denotes a fixed $\F_q$-subalgebra and $m$ denotes its dimension over $\F_q$. The goal of this section is to construct explicitly a Galois $\A^\times$-extension $S/R$ that is $\A^\times$-generic in the above sense. 

We denote henceforth by $\G $ the multiplicative group $\G _m(\A)$ as an algebraic group defined over $\F_q$. Let $a_1,a_2,\ldots,a_m$ be a basis of $\A$ over $\F_q$ and define 
\begin{equation}
	\label{E:main-matrix} A(\b{t})=\sum_{i=1}^m t_i a_i, 
\end{equation}
where $\b{t}=(t_1,\ldots,t_m)$ are indeterminates.

Let $d=\det(A)$ and let $R=\F_q[\b{t},1/d]$. By the construction of $R$ we clearly have $A\in \G(R)$. Let $E$ be the splitting field of the Frobenius module given by $A$ over $K=\F_q(\b{t})$. By Theorem \ref{T:lang}, there exists $U\in \G (K_\sepa)$ such that $A=U (U^{(q)})^{-1}$. Recall that by Corollary \ref{C:splitting-field}, the coefficients $u_{ij}$ of $U$ generate $E$ over $K$. We write, by abuse of notation, $E=K(U)$. We define similarly $S=R[U]$, the subring of $E$ generated by the $u_{ij}$'s over $R$. Note that by Proposition \ref{P:integrality} the $u_{ij}$'s are integral over $R$, so $S$ is finitely generated as an $R$-module.

Here is the main theorem in this section. 
\begin{theorem}
	\label{T:main} With the notation above, we have 
	\begin{enumerate}
		\item $\Gal(E/K)\simeq\G (\F_q).$ 
		\item The ring extension $S/R$ is $\G (\F_q)$-generic. 
	\end{enumerate}
\end{theorem}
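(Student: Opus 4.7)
The proof splits naturally into the two claims. For (1), the Upper Bound Theorem \ref{T:upperbound} furnishes an injection $\rho \colon \Gal(E/K) \hookrightarrow \G(\F_q)$; to obtain surjectivity, set $H := \im(\rho)$. Because $\{a_1, \ldots, a_m\}$ is an $\F_q$-basis of $\A$, the assignment $\bb{\xi} \mapsto A(\bb{\xi}) = \sum \xi_i a_i$ is a bijection between $\{\bb{\xi} \in \F_q^m : d(\bb{\xi}) \neq 0\}$ and $\G(\F_q) = \A^\times$, so every element of $\G(\F_q)$ is a specialization of $A$. The Lower Bound Theorem \ref{T:lowerbound} therefore tells us each $x \in \G(\F_q)$ is $\G(\bbar{\F}_q)$-conjugate to an element of $H$.

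To upgrade $\G(\bbar{\F}_q)$-conjugacy to $\G(\F_q)$-conjugacy, suppose $x = g h g^{-1}$ with $x, h \in \G(\F_q)$ and $g \in \G(\bbar{\F}_q)$. Applying the $q$-power Frobenius coordinatewise gives $x = g^{(q)} h (g^{(q)})^{-1}$, so $g^{-1} g^{(q)}$ lies in the centralizer $C_\G(h)$. Crucially, $C_\G(h)$ is the group of units of the subalgebra $\{b \in \A : bh = hb\}$ of $\A$, hence is itself the multiplicative group of a finite-dimensional $\F_q$-algebra and in particular connected. Applying Lang--Steinberg (Theorem \ref{T:lang}) to $C_\G(h)$ yields $g_1 \in C_\G(h)(\bbar{\F}_q)$ with $g_1^{-1} g_1^{(q)} = g^{-1} g^{(q)}$, and then $g' := g g_1^{-1}$ lies in $\G(\F_q)$ and satisfies $g' h (g')^{-1} = x$. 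Since a finite group is not the union of conjugates of any proper subgroup, $H = \G(\F_q)$, proving (1).

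For (2), a Galois $\G(\F_q)$-algebra $M/L$ is classified up to isomorphism by a continuous homomorphism $\psi \colon \Gal(L_\sepa/L) \to \G(\F_q)$. The plan is to construct $A_0 \in \G(L)$ whose Frobenius module realizes $\psi$, then expand $A_0 = \sum \xi_i a_i$ in the basis $\{a_i\}$ to get $\xi_i \in L$ with $d(\bb{\xi}) = \det(A_0) \neq 0$; the assignment $t_i \mapsto \xi_i$ gives the desired homomorphism $\varphi \colon R \to L$. Functoriality of the construction of $S$ as the ring generated by a Lang element of $A(\b{t})$, together with Proposition \ref{P:integral-galois}, then identifies $S \otimes_\varphi L$ with the Galois $\G(\F_q)$-algebra attached to $A_0$, which is isomorphic to $M$ by construction.

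The main obstacle is the realization step: one must produce $U_0 \in \G(L_\sepa)$ with $\sigma(U_0) = U_0 \psi(\sigma)$ for all $\sigma$, so that $A_0 := U_0 (U_0^{(q)})^{-1}$ is Frobenius-invariant and lies in $\G(L)$. Equivalently, the cocycle $\psi$ must be a coboundary in $\G(L_\sepa)$, i.e., $H^1(L, \G) = 0$. For $\G = \G_m(\A)$ this follows from the short exact sequence
\[
1 \to 1 + \mathrm{rad}(\A) \to \G_m(\A) \to \G_m(\A/\mathrm{rad}(\A)) \to 1.
\]
The kernel is unipotent, hence has trivial $H^1$ via its composition series with $\mathbb{G}_a$-subquotients; by Wedderburn, the quotient is a product of Weil restrictions $\mathrm{Res}_{\F_{q^{d_i}}/\F_q} \GL_{n_i}$, whose $H^1$ over $L$ vanishes by Hilbert 90.
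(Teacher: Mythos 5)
Your proof of part (1) follows the paper essentially verbatim: Upper Bound for injectivity, Lower Bound plus a conjugacy-upgrade lemma plus Jordan's theorem for surjectivity. Your upgrade from $\G(\bbar{\F}_q)$-conjugacy to $\G(\F_q)$-conjugacy uses Lang--Steinberg applied to the centralizer subalgebra's unit group, whereas the paper (Lemma~\ref{L:conj}) invokes generalized Hilbert~90 for $(\mc Z\otimes\bbar\F_q)^\times$; over a finite base field these are the same phenomenon, and your observation that $C_\G(h)$ is the unit group of a subalgebra, hence connected, is exactly what makes either version work. So (1) is fine.

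For part (2) you do take a genuinely different route in one respect and have a real gap in another. The paper first decomposes $M\simeq\Map_H(G,N)$ with $N=M\delta$ a field and $H=G_\delta$, and applies generalized Hilbert~90 to $(\A\otimes N)^\times$ with the finite group $\Gal(N/L)$ acting; this directly produces $W\in\G(N)$. You instead work with the full profinite cocycle $\psi\colon\Gal(L_\sepa/L)\to\G(\F_q)$ and prove $H^1(L,\G)=0$ via the dévissage $1\to 1+\mathrm{rad}(\A)\to\G\to\G_m(\A/\mathrm{rad}(\A))\to 1$. That dévissage is correct (the kernel is split unipotent, the quotient a product of Weil restrictions of $\GL_{n_i}$), but it is heavier machinery than needed: the same vanishing follows in one stroke from Galois descent for modules over the finite-dimensional $L$-algebra $\A\otimes_{\F_q}L$ (the argument the paper uses), with no appeal to Wedderburn or composition series.

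The genuine gap is the final identification. You write that ``functoriality of the construction of $S$ as the ring generated by a Lang element of $A(\b t)$, together with Proposition~\ref{P:integral-galois}, then identifies $S\otimes_\varphi L$ with the Galois $\G(\F_q)$-algebra attached to $A_0$, which is isomorphic to $M$ by construction.'' But $S=R[U]$ depends on a \emph{choice} of Lang element $U$, so there is no literal functoriality to invoke, and the assertion that $S\otimes_\varphi L\simeq M$ \emph{as $G$-algebras} is precisely the part of the proof that requires work. Concretely one must (i) extend $\varphi$ to $\hat f\colon S\to\bbar L$ via integrality, (ii) observe $\hat f(U)=U_0 g$ for some $g\in\G(\F_q)$ and replace $U$ by $Ug^{-1}$ to align the two Lang elements, (iii) verify that $\hat f$ is $H$-equivariant for the subgroup $H=\im(\psi)$ under the identifications $\Gal(S/R)\simeq G$ and $\Gal(N/L)\simeq H$, and (iv) check that the induced $G$-map $S\to\Map_H(G,N)$ gives an isomorphism $S\otimes_\varphi L\to M$ of $G$-algebras (using that a morphism of Galois $G$-extensions is automatically an isomorphism). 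You identify the $H^1$ realization as ``the main obstacle,'' but the isomorphism step is at least as delicate, and in your write-up it is asserted rather than proved.
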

The following two lemmas will be needed in the proof of Theorem \ref{T:main}. 
\begin{lemma}
	\label{L:conj} Let $a,b\in \G (\F_q)$. If $a$ and $b$ are conjugate in $\G (\bbar{\F}_q)$, then they are conjugate in $\G (\F_q)$. 
\end{lemma}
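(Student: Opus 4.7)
The strategy is to apply the Lang--Steinberg theorem (Theorem \ref{T:lang}) to the centralizer $C := Z_\G(a)$. The only preliminary that requires any thought is to verify that $C$ is a connected algebraic group defined over $\F_q$. For this I consider the $\F_q$-subalgebra $B := \{x \in \A : xa = ax\}$ of $\A$. An element $g \in \A^\times$ commutes with $a$ iff $g \in B$ (and then $g^{-1}$ automatically lies in $B$, by conjugating $ga = ag$ by $g^{-1}$ on both sides), so $C$ coincides with the unit group $\G_m(B)$ of $B$. Viewed as a scheme over $\F_q$, $B$ is a linear subspace of $\A$, hence an affine space and in particular geometrically irreducible; its unit group is the Zariski-open subscheme cut out by nonvanishing of $\det(L_x)$, where $L_x$ is left multiplication on $B$. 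Thus $C$ is a connected $\F_q$-subgroup of $\G$.

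With this in hand, pick $g \in \G(\bbar{\F}_q)$ with $gag^{-1} = b$ and let $F$ denote the $q$-power Frobenius. Applying $F$ to this relation and using that $a, b$ are $F$-fixed yields $F(g)\,a\,F(g)^{-1} = gag^{-1}$, which rearranges to
\[
u := g^{-1}F(g) \in C(\bbar{\F}_q).
\]
Lang--Steinberg, applied to the connected $\F_q$-group $C$, then produces $c \in C(\bbar{\F}_q)$ with $c\,F(c)^{-1} = u$. Setting $h := gc$, the identity
\[
F(h) = F(g)F(c) = g\cdot u \cdot F(c) = g\cdot cF(c)^{-1}\cdot F(c) = gc = h
\]
shows $h \in \G(\F_q)$, and since $c$ centralizes $a$ we have $hah^{-1} = g(cac^{-1})g^{-1} = gag^{-1} = b$, as required.

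The main obstacle is really just the connectedness of the centralizer, without which Lang--Steinberg does not apply. That connectedness, however, is the standard observation that the unit group of \emph{any} finite-dimensional associative $\F_q$-algebra sits as a Zariski-open subset of an affine space; once this is in place, the rest of the argument is the usual ``Lang cocycle'' computation.
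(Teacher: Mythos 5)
Your proof is correct, and it reaches the same structural insight as the paper — the conjugator can be corrected inside the centralizer of $a$, which is the unit group of the subalgebra $B = Z_\A(a)$ — but executes the correction with a different tool. The paper works with the full profinite Galois group: from $a = ubu^{-1}$ it forms the $1$-cocycle $\sigma \mapsto u^{-1}\sigma(u)$ valued in $(\mathcal Z \otimes \bbar{\F}_q)^\times$ (where $\mathcal Z = Z_\A(b)$) and kills it by the generalized Hilbert Theorem 90 for unit groups of algebras, which needs no connectedness hypothesis but does need the target to be recognizable as $\GL_1$ of an algebra. You instead observe that $u = g^{-1}F(g)$ lands in $Z_\G(a)(\bbar{\F}_q)$, verify that $Z_\G(a) = \G_m(B)$ is a connected $\F_q$-group (a Zariski-open in an affine space), and invoke Lang--Steinberg directly. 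Over a finite base these two routes are two faces of the same vanishing statement — Lang's theorem is precisely what gives $H^1(\F_q, G) = 1$ for connected $G$ — so the choice is largely a matter of taste. Your route has the small advantage of staying entirely inside the Lang--Steinberg framework the paper already relies on (Theorem \ref{T:lang}), at the cost of having to establish connectedness; the paper's Hilbert 90 route sidesteps connectedness but imports a second cohomological input. One small notational point: you should make explicit that "Lang--Steinberg applied over $\bbar{\F}_q$" means taking $K = \bbar{\F}_q$ in Theorem \ref{T:lang}, so that the surjectivity of the Lang map $c \mapsto cF(c)^{-1}$ on $C(\bbar{\F}_q)$ is indeed an instance of the theorem as stated.
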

\begin{proof}
	Suppose $a=u b u^{-1}$ with $u\in \G (\bbar{\F}_q)$. Let $\sigma\in \Gal(\bbar{\F}_q/\F_q)$. Then $z_\sigma:=u^{-1}\sigma(u)$ is in ${(\mc{Z}\otimes \bbar{\F}_q)}^\times$, where $\mc{Z}$ is the centralizer of $b$ in $\A$. The map $\sigma\mapsto z_\sigma$ is a $1$-cocycle with values in ${(\mc{Z}\otimes \bbar{\F}_q)}^\times$. By the generalized Hilbert Theorem 90 (see e.g. \cite[Chap. X]{Serre:1968zr}), this $1$-cocycle is trivial, that is, there exists $w\in {(\mc{Z}\otimes \bbar{\F}_q)}^\times$ such that $z_\sigma:=w^{-1}\sigma(w)$ for all $\sigma\in \Gal(\bbar{\F}_q/\F_q)$. Then $v:=u w^{-1}$ satisfies $a=v b v^{-1}$ and is fixed under $ \Gal(\bbar{\F}_q/\F_q)$, that is, $v$ is in $\G (\F_q)=\A^\times$. 
\end{proof}
\begin{lemma}
	\label{L:Jordan} Let $G$ be a finite group and let $C_1,C_2,\ldots,C_h$ be the conjugacy classes of $G$. Let $g_i\in C_i$ for $i=1,\ldots,h$. Then the set $\{g_1,g_2,\ldots,g_h\}$ generates $G$. 
\end{lemma}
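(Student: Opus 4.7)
The plan is to reduce the claim to the classical theorem of Jordan that a finite group is never the union of conjugates of a proper subgroup.

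Let $H = \langle g_1, g_2, \ldots, g_h\rangle$. The defining property of a conjugacy class is that for each $i$, every element of $C_i$ is $G$-conjugate to $g_i$, hence lies in $xHx^{-1}$ for some $x\in G$. Since $G = \bigcup_{i=1}^h C_i$, this gives
\[
G = \bigcup_{x\in G} xHx^{-1}.
\]
So the lemma is equivalent to showing that if $H\subsetneq G$ then $G$ cannot be covered by the conjugates of $H$.

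I would prove this counting statement directly. The number of distinct conjugates of $H$ is $[G:N_G(H)]$, and since $H\subseteq N_G(H)$, this is at most $[G:H]$. All of these conjugates contain the identity, so using inclusion-exclusion crudely,
\[
\Bigl|\bigcup_{x\in G} xHx^{-1}\Bigr| \;\le\; 1 + [G:N_G(H)](|H|-1) \;\le\; 1 + [G:H](|H|-1).
\]
A short arithmetic manipulation shows $1 + [G:H](|H|-1) = |G| - [G:H] + 1 < |G|$ whenever $[G:H] > 1$, i.e.\ whenever $H$ is proper. Therefore the union of the conjugates of a proper subgroup of $G$ is strictly smaller than $G$, contradicting the displayed equality above. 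Hence $H = G$.

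There is no real obstacle here: the only subtle point is the counting step, which must carefully account for the fact that all conjugates of $H$ share the identity (otherwise one gets an equality $[G:H]\cdot|H| = |G|$ and nothing follows). Once that is handled, the argument is immediate.
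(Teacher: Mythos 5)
Your proof is correct. The paper gives no argument of its own for this lemma---it simply cites Serre's survey \emph{On a theorem of Jordan} (Theorem 4$'$ there), which is exactly the statement. So while the mathematical content is the same (this is Jordan's 1872 theorem), you have supplied the actual proof that the paper delegates to the reference. Your reduction to the statement that a finite group cannot equal the union of the conjugates of a proper subgroup, followed by the counting bound
\[
\Bigl|\bigcup_{x\in G} xHx^{-1}\Bigr|\;\le\;1+[G:N_G(H)]\,(|H|-1)\;\le\;1+[G:H]\,(|H|-1)\;=\;|G|-[G:H]+1\;<\;|G|
\]
when $[G:H]>1$, is precisely the classical argument and is airtight: the only delicate point---that the $[G:N_G(H)]$ conjugates all share the identity, so one must subtract before multiplying---is exactly the one you flag and handle. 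The trade-off is the usual one: the paper keeps the exposition short by outsourcing a well-known result, whereas your version makes the lemma self-contained at the cost of a short counting digression, which is arguably preferable for a reader who does not have Serre's article at hand.
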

\begin{proof}
	See \cite[Theorem 4']{Serre:2003ly}. 
\end{proof}
\begin{proof}
	[Proof of Theorem \ref{T:main}] (1) By Theorem \ref{T:lang}, there exists $U\in \G(K_{\sepa})$ such that $A=U {U^{(q)}}^{-1}$. Let $\rho: \Gal(E/K)\to \G (\F_q)$ be the map defined by $\rho(\sigma)= U^{-1} \sigma(U)$. By Theorem \ref{T:upperbound}, the map $\rho$ is an injective group homomorphism.
	
	We have on the one hand by Theorem \ref{T:lowerbound} and Lemma \ref{L:conj} that every specialization $A(\bb\xi)\in \G (\F_q)$ (where $\bb\xi\in \F_q^m$) is conjugate in $\G (\F_q)$ to an element of $\mathrm{im}(\rho)$. On the other hand, every element of $\G (\F_q)$ is of the form $A(\bb\xi)$ for some $\bb\xi\in \F_q^m$, thus every conjugacy class of $\G (\F_q)$ intersects nontrivially $\mathrm{im}(\rho)$. We conclude by Lemma \ref{L:Jordan} that $\mathrm{im}(\rho)=\G (\F_q)$.
	
	(2) Let $L$ be a field containing $\F_q$ and let $M/L$ be a Galois $G$-algebra with group $G=\G (\F_q)$ and let $\delta\in M$ be a primitive idempotent. Then $N=M\delta$ is a field that is Galois with group $H=G_\delta$ over $K$. Moreover, there is an isomorphism of $G$-algebras over $L$
	\[ M\simeq \Map_H(G,N), \]
	where $\Map_H(G,N)$ is the algebra of $H$-equivariant maps $G\to N$ \cite[Proposition 18.18]{knus:1998fk}. The action of $G$ is given by $(g\alpha)(x)=\alpha(x g)$ for $\alpha\in \Map_H(G,N)$ and $g\in G$. Under the above isomorphism, the primitive idempotents of $M$ correspond to the characteristic functions of the right cosets of $H$ in $G$. In particular, $\delta$ corresponds to the characteristic function of $H$.
	
	Let $\rho: \Gal(N/L)\to H$ be an isomorphism. Composing with the inclusion $H\subset G=\G(\F_q)\subset \G (N)$, we can view $\rho$ as a $1$-cocycle with values in $\G (N)=(\A\otimes N)^\times$. By the generalized Hilbert Theorem 90 (see e.g. \cite[Chap. X]{Serre:1968zr}), $\rho$ is a trivial $1$-cocycle, i.e. there exits $W\in \G (N)$ such that $\rho(\sigma)=W^{-1}\sigma(W)$ for all $\sigma \in \Gal(N/L)$ .
	
	We first observe that $N$ is generated over $L$ by the coefficients $w_{ij}$ of $W$. Indeed, if $\sigma\in G$ is such that $\sigma(w_{ij})=w_{ij}$ for $i,j=1,\ldots,n$, then $\rho(\sigma)=1$ and consequently $\sigma=1$ since $\rho$ is injective.
	
	Let $B=W {W^{(q)}}^{-1}$. It is readily verified that $B$ is fixed by $\Gal(N/L)$ and hence lies in $\G (L)$. Thus we can write $B=A(\bb\xi)$ for some $\bb\xi=(\xi_1,\ldots,\xi_n)\in L^n$. Define a $\F_q$-algebra homomorphism $f: R\to L$ by $\b{t}\mapsto \bb\xi$. Since $S$ is integral over $R$, we can extend $f$ to a ring homomorphism \cite[Ch. VII, Proposition 3.1]{Lang:2002ve}
	\[ \hat{f}: S \rightarrow \overline{L}. \]
	Let $U$ be the class of $\b{U}$ in $S$. Then $U ({U^{(q)}})^{-1}=A$ and $W_1:=\hat{f}(U)\in \GL_n(\overline{L})$ satisfies $W_1 ({W_1^{(q)}})^{-1}=B$ so $W_1=Wg$ for some $g\in \GL_n (\F_q)$. Replacing $U$ by $U g^{-1}$, we can assume $ \hat{f}(U)=W$. Since the coefficients $u_{ij}$ of $U$ generate $S$ over $R$ and the coefficients $w_{ij}$ of $W$ generate $N$ over $L$, we have 
	\begin{equation}
		\label{E:surj} \hat{f}(S)L=N. 
	\end{equation}
	Since $\hat{f}$ is $\F_q$-linear, we have
	\[ \hat{f}(U h)= Wh \]
	for $h\in H$. Identifying $\Gal(S/R)$ with $G$ via the isomorphism $\sigma\mapsto U^{-1}\sigma(U)$ and $\Gal(N/L)$ with $H$ via the isomorphism $\tau\mapsto W^{-1}\tau(W)$, we have from above that
	\[ \hat{f}(h(U))= h(W) \]
	for $h\in H$, which implies that $\hat{f}$ is an $H$-homomorphism. Then we can consider the induced $G$-homomorphism
	\[ F: S\longrightarrow M=\Map_H(G,N) \]
	defined by $F(s)(g)=\hat{f}(g(s))$ for $s\in S$ and $g\in G$. 
	
	Since $S/R$ is $G$-Galois by Proposition \ref{P:integral-galois}, so is $S\otimes_f L/L$ and the map 
	\begin{equation}
		\label{E:iso} 
		\begin{aligned}
			S\otimes_f L & \longrightarrow M\\
			s\otimes x &\longmapsto F(s)x 
		\end{aligned}
	\end{equation}
	is a morphism of Galois $G$-extensions of $L$, which is automatically an isomorphism (see e.g. \cite[Proposition 5.1.1]{Jensen:2002bh}). 
\end{proof}

\section{Generic Polynomials}

We recall here the definition of generic polynomial. We refer to \cite{Jensen:2002bh} for details and a wealth of examples.

Let $\b{t} = (t_1,\dots t_m)$ be indeterminates over the field $k$ and let $G$ be a finite group. 
\begin{definition}
	A monic separable polynomial $f(Y;\b{t}) \in k(\b{t})[Y]$ is called {\em $G$-generic } over $k$ if the following conditions are satisfied: 
	\begin{enumerate}
		\item $\Gal(f(Y;\b{t})/k(\b{t}))\simeq G$. 
		\item Every Galois $G$-extension $M/L$, where $L$ is a field containing $k$, is the splitting field of a specialization $f(Y;\bb\xi)$ for some $\bb\xi \in L^n$. 
	\end{enumerate}
\end{definition}

In this section we give a method to explicitly construct a generic polynomial for the group $G=\A^\times$ over the field $k=\F_q$. The method is based on the cyclicity of Frobenius modules over $k(\b{t})$ (see \cite[Section I.2] {Matzat:2004kx}). 
\begin{definition}
	A Frobenius module $(M, \varphi)$ over a field $K$ is {\em cyclic} if there exists a nonzero vector $v\in M$ such that $\{v, \varphi(v), \varphi ^2(v), \ldots, \varphi ^{n-1}(v)\}$ forms a basis of $M$. 
\end{definition}
Note that the matrix of $(M,\varphi)$ relative to a cyclic basis
\[\{v, \varphi(v), \varphi ^2(v), \ldots, \varphi ^{n-1}(v)\}\]
has the form 
\begin{equation}
	\label{delta} \Delta= 
	\begin{pmatrix}
		0 & 0 & \cdots &0 & a_0 \\
		1 & 0 &\cdots&0 &a_1\\
		0 & 1 &\ddots&\vdots &\vdots\\
		\vdots & \ddots&\ddots &0 &\vdots\\
		0 & \cdots&0 & 1&a_{n-1}\\
	\end{pmatrix}
	. 
\end{equation}

In \cite[Theorem 2.1]{Matzat:2004kx}, Matzat proves in particular that if the ground field $K$ is infinite, all Frobenius modules over $K$ are cyclic. The Frobenius modules we consider in this section are over the field $K=\F_q(\b{t})$, where $\b{t}=(t_1,\ldots,t_m)$, so they are always cyclic.

For $B\in \GL_n(K)$, we denote by $B^*$ the matrix
\[ B^* = (B^{-1})^T. \]
Notice that the map $B\mapsto B^*$ is a group homomorphism.
\begin{proposition}
	\label{gsf}Let $B\in \GL_n(K)$. The systems $BX^{(q)} =X$ and $B^*X^{(q)} =X$ have the same splitting fields. 
\end{proposition}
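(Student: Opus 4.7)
The plan is to use the explicit description of the splitting field provided by Corollary \ref{C:splitting-field}: if $U\in \GL_n(K_\sepa)$ satisfies the Lang--Steinberg equation $B=U(U^{(q)})^{-1}$, then the splitting field of the system $BX^{(q)}=X$ is $K(U)$, the subfield of $K_\sepa$ generated by the entries of $U$. It therefore suffices to produce, from such a $U$, an explicit Lang--Steinberg element for $B^*$ whose entries generate the same subfield of $K_\sepa$.

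First I would define $V:=(U^{-1})^T=(U^T)^{-1}$ and verify the algebraic identity
\[
V\bigl(V^{(q)}\bigr)^{-1}=(U^{-1})^T\bigl(U^{(q)}\bigr)^T=\bigl(U^{(q)}U^{-1}\bigr)^T=\bigl(B^{-1}\bigr)^T=B^*,
\]
using that transpose and the $q$-th power entrywise map commute and that $(M^{(q)})^{-1}=(M^{-1})^{(q)}$. Thus $V$ is a valid Lang--Steinberg element for $B^*$, and by Corollary \ref{C:splitting-field} the splitting field of $B^*X^{(q)}=X$ is $K(V)$.

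The remaining point is to observe that $K(U)=K(V)$. The entries of $U^{-1}$ are the cofactors of $U$ divided by $\det(U)$, hence lie in $K(U)$; transposing gives $K(V)\subseteq K(U)$. Applying the same reasoning to $U=(V^{-1})^T$ yields the reverse inclusion. Hence $K(U)=K(V)$, and the two splitting fields coincide.

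I do not anticipate a genuine obstacle here; the only care needed is the bookkeeping around transposes and the commutativity of the operations $(\cdot)^T$, $(\cdot)^{-1}$, and $(\cdot)^{(q)}$, together with the (trivial) fact that passing to the inverse of an invertible matrix does not enlarge or shrink the field generated by its entries.
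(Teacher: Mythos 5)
Your proof is correct and follows essentially the same route as the paper: both apply the Lang--Steinberg description of the splitting field from Corollary \ref{C:splitting-field}, observe that $U^*=(U^{-1})^T$ is a Lang--Steinberg element for $B^*$, and conclude since $U$ and $U^*$ generate the same field. You have simply spelled out the transpose/inverse/Frobenius bookkeeping and the field-equality step in more detail than the paper does.
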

\begin{proof}
	Let $U\in \GL_n(K_{\sepa})$ be such that $B=U(U^{(q)})^{-1}$. As we have seen in \ref{C:splitting-field}, the splitting field of the Frobenius module given by $B$ is found by adjoining the coefficients of $U$ to the base field $K$. If we apply the matrix operator $^*$, we obtain $$B^*=U^*({U^*}^{(q)})^{-1},$$ which shows that the splitting field of the Frobenius module given by $B^*$ is generated over $K$ by the coefficients of $U^*$. Clearly the coefficients of $U$ and those of $U^*$ generate the same field. 
\end{proof}

Let $B\in\GL_n(K)$, where $K$ is an infinite field. Then the Frobenius module $(K^n,\varphi_B)$, where $\varphi_BX=BX^{(q)}$ admits a cyclic basis, that is, there exists $N\in \GL_n(K)$ such that 
\begin{equation}
	\label{E:change-basis} N^{-1} B N^{(q)}=\Delta, 
\end{equation}
Where $\Delta$ is a matrix of the form \eqref{delta}. An immediate application of Proposition \ref{gsf} is 
\begin{corollary}
	\label{sf} The splitting fields of the Frobenius modules given by $B$ and $\Delta^*$ are the same. 
\end{corollary}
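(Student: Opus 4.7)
The plan is to reduce the statement to Proposition \ref{gsf} by first showing that $B$ and $\Delta$ themselves have the same splitting field, and then applying the proposition to $\Delta$.

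First, I would exploit the gauge equivalence \eqref{E:change-basis}, namely $B = N\Delta (N^{(q)})^{-1}$ with $N\in\GL_n(K)$. Let $U\in\GL_n(K_\sepa)$ be a Lang--Steinberg element for $B$, so that $B=U(U^{(q)})^{-1}$. Setting $V:=N^{-1}U\in\GL_n(K_\sepa)$, a direct computation gives
\[
V(V^{(q)})^{-1} = N^{-1}U(U^{(q)})^{-1}N^{(q)} = N^{-1}BN^{(q)} = \Delta,
\]
so $V$ is a Lang--Steinberg element for $\Delta$. By Corollary \ref{C:splitting-field}, the splitting field of the Frobenius module associated to $B$ (resp.\ $\Delta$) is obtained by adjoining to $K$ the entries of $U$ (resp.\ $V$). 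Since $N\in\GL_n(K)$, the entries of $U$ and $V=N^{-1}U$ generate the same extension of $K$, so the two splitting fields coincide. (Equivalently, the $K$-linear change of variables $X=NY$ identifies the solutions in $K_\sepa^n$ of $BX^{(q)}=X$ with those of $\Delta Y^{(q)}=Y$.)

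Then I would invoke Proposition \ref{gsf} directly, applied to $\Delta$: it asserts that the Frobenius modules given by $\Delta$ and $\Delta^*$ have the same splitting field. Combining this with the equality established in the previous paragraph yields
\[
\text{splitting field of }B \;=\; \text{splitting field of }\Delta \;=\; \text{splitting field of }\Delta^*,
\]
which is exactly the content of the corollary.

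I do not anticipate a genuine obstacle here; the only point requiring a little care is checking that the gauge transformation by $N\in\GL_n(K)$ respects the splitting field, but this is essentially immediate once one writes down the transformed Lang--Steinberg element $V=N^{-1}U$ and uses that $N$ has entries in the base field $K$.
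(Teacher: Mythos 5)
Your proof is correct and is exactly the argument the paper has in mind (the paper simply calls the corollary an ``immediate application of Proposition~\ref{gsf}'' without spelling out the gauge-equivalence step). You fill in precisely the missing detail—that a basis change $N\in\GL_n(K)$ transports a Lang--Steinberg element $U$ for $B$ to $V=N^{-1}U$ for $\Delta$ without changing the field $K(U)=K(V)$—and then apply Proposition~\ref{gsf} to $\Delta$, which is the intended reading.
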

\noindent 

Computing the splitting field of the Frobenius module given by $\Delta^*$ is straightforward. We solve explicitly the system $\Delta^* X^{(q)}=X$ or, equivalently, the system $X^{(q)}=\Delta^T X$. Letting $X=(x_1,\ldots,x_n)^T$, we have
\[ 
\begin{cases}
	x_1 ^q &= x_2 \\
	&\vdots\\
	x_{n-1} ^q &= x_{n}\\
	x_{n} ^q &= a_0x_1+a_1x_2+\cdots+a_{n-1}x_{n}. 
\end{cases}
\]
Setting $x_1=y$, we have from the above system $x_i=y^{q^{i-1}}$ for $i=1,\ldots,n$, where $y$ satisfies the equation
\[ y^{q^n}=a_0y+a_1y^q+\cdots+a_{n-1}y^{q^{n-1}}. \]
\begin{corollary}
	\label{spA} The splitting fields of the Frobenius module given by $B$ and the additive polynomial $f(Y)=Y^{q^n}-a_0y-a_1Y^q-\cdots-a_{n-1}Y^{q^{n-1}}$. coincide. 
\end{corollary}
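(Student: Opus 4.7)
The plan is to string together Corollary \ref{sf} with the explicit computation immediately preceding the statement. By Corollary \ref{sf}, the splitting field of the Frobenius module given by $B$ coincides with that given by $\Delta^*$, so it suffices to identify the splitting field of $\Delta^* X^{(q)} = X$ with the splitting field of the additive polynomial $f(Y)$.

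First I would observe that the calculation carried out just before the statement sets up an $\F_q$-linear map from the solution space $\Sol^{\varphi_{\Delta^*}}(\bbar{K}^n)$ to the root set of $f(Y)$ in $\bbar{K}$ given by $X=(x_1,\ldots,x_n)^T \mapsto x_1$. The recursion $x_{i} = x_{i-1}^q$ that falls out of the first $n-1$ equations shows two things simultaneously: the map is injective (if $x_1 = 0$ then all $x_i = 0$), and any root $y$ of $f$ lifts to a solution $(y, y^q, \ldots, y^{q^{n-1}})^T$ of the whole system. Thus the map is an $\F_q$-linear isomorphism onto the root set of $f$.

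Next I would do a dimension check to confirm this is consistent: by Proposition \ref{P:dim}, the left-hand side has $\F_q$-dimension $n$. Since $\Delta \in \GL_n(K)$ we have $a_0 = \pm \det(\Delta) \ne 0$, so $f'(Y) = -a_0 \ne 0$, meaning $f$ is separable; its $q^n$ distinct roots form an $\F_q$-subspace of $\bbar{K}$ of dimension exactly $n$, matching the solution space. (This step is not strictly needed for the equality of splitting fields, but it makes the bijection transparent.)

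Finally I would conclude by comparing the fields generated. The splitting field of the Frobenius module given by $\Delta^*$ is the field generated over $K$ by all coordinates of all solutions $X$; but since $x_i = x_1^{q^{i-1}}$, this is exactly the field generated by all first coordinates, i.e.\ by all roots of $f(Y)$, which is the splitting field of $f$. Combining with Corollary \ref{sf} gives the desired equality. The only real subtlety is the bookkeeping in the correspondence between solutions and roots; once that is written down cleanly the result is immediate.
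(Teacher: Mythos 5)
Your proposal is correct and follows the same route the paper takes: the paper treats Corollary~\ref{spA} as an immediate consequence of Corollary~\ref{sf} together with the explicit solution of the system $\Delta^* X^{(q)}=X$ carried out just above the corollary statement, which is exactly what you spell out. Your bijection between the solution space and the root set of $f$ (via $X\mapsto x_1$ with inverse $y\mapsto(y,y^q,\ldots,y^{q^{n-1}})^T$) and the observation that the field generated by all coordinates equals the field generated by the first coordinates is precisely the content left implicit in the paper, and your separability check matches the paper's Remark (where, incidentally, your $f'(Y)=-a_0$ is the correct sign; the paper drops the minus).
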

\begin{remark}
	The polynomial $f(Y)$ above is separable since $f'(Y)=a_0=\det\Delta\ne 0$. 
\end{remark}

We shall now apply the above observations to obtain an explicit generic polynomial for the group $\A^\times$, where $\A\subset M_n(\F_q)$ is a $\F_q$-subalgebra. Recall that $\G $ denotes the multiplicative group $\G _m(\A)$ as an algebraic group defined over $\F_q$. Let $v_1,v_2,\ldots,v_m$ be a basis of $\A$ over $\F_q$ and define
\[ A(\b{t})=\sum_{i=1}^m t_i v_i, \]
where the $t_i$'s are indeterminates.

Our next goal is to show that for $K=\F_q(\b{t})$ and $B=A(\b{t})$, the polynomial $f\in K[Y]$ given by Corollary \ref{spA} is $\G (\F_q)$-generic. We will need the following preliminary lemmas. 
\begin{lemma}
	\label{L:lang2} Let $L$ be a field and let $B\in \G (L)$. Then the morphism of affine varieties defined over $L$ 
	\begin{equation}
		\begin{diagram}
			\psi: & \G &\rTo & \G \\
			& X &\rMapsto &X^{-1} B X^{(q)}\\
		\end{diagram}
	\end{equation}
	is an epimorphism, that is, the induced ring homomorphism $\psi^*: L[\G ]\to L[\G ]$ is injective. 
\end{lemma}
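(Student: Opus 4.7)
The plan is to reduce injectivity of $\psi^{*}$ to dominance of $\psi$, and then to establish dominance by exhibiting surjectivity of $\psi$ on $\bar L$-points, which in turn will follow from two applications of the Lang--Steinberg theorem.

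First I would record that $\G=\G_m(\A)$ is the principal open subscheme of the affine space $\A$ cut out by the nonvanishing of the norm form (the determinant of left multiplication on $\A$). In particular $\G$ is geometrically irreducible over $\F_q$, so $L[\G]$ is an integral domain and the $\bar L$-points of $\G_L$ are Zariski dense. From these two facts, a morphism $\psi\colon\G\to\G$ of $L$-schemes is dominant---equivalently, $\psi^{*}\colon L[\G]\to L[\G]$ is injective---as soon as $\psi(\G(\bar L))=\G(\bar L)$.

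The core step is therefore surjectivity on $\bar L$-points, which I would prove by direct construction. Given an arbitrary $C\in\G(\bar L)$, Theorem \ref{T:lang} applied to $B$ and to $C$ produces $V,W\in\G(\bar L)$ with
\[
B=V(V^{(q)})^{-1}\qquad\text{and}\qquad C=W(W^{(q)})^{-1}.
\]
A short calculation---using that $X\mapsto X^{(q)}$ is a group homomorphism---then shows that $X:=VW^{-1}$ satisfies $X^{-1}BX^{(q)}=C$, i.e.\ $\psi(X)=C$.

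There is no serious obstacle here; the only boxes to tick are that $\G$ is a connected algebraic group defined over $\F_q$ (clear, since it is an open subvariety of affine space) so that Lang--Steinberg applies, and that $L\supset\F_q$ so that $X\mapsto X^{(q)}$ is an $L$-morphism. The content of the lemma is essentially a concrete \emph{change-of-variables} trick that reduces the surjectivity question to the splittings already furnished by Lang--Steinberg.
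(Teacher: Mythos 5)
Your proposal is correct and is essentially the paper's proof: both establish surjectivity of $\psi$ on $\bar L$-points via Lang--Steinberg and deduce injectivity of $\psi^*$ from dominance. The only cosmetic difference is that you apply the stated form $U\mapsto U(U^{(q)})^{-1}$ twice (to $B$ and to the target $C$) and take $X=VW^{-1}$, whereas the paper makes the single substitution $X=UY$ to turn $\psi$ into the Lang map $Y\mapsto Y^{-1}Y^{(q)}$; these are the same calculation written two ways.
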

\begin{proof}
	Over an algebraic closure $\bar{L}$, the map $\psi: \G (\bar{L})\to \G (\bar{L})$ is surjective as an immediate consequence of the Lang-Steinberg theorem. Indeed, write $B=U {U^{(q)}}^{-1}$ with $U\in \G (\bar{L})$ and let $Y=U^{-1}X$. Then $\psi(X)=Y^{-1} Y^{(q)}$. Theorem \ref{T:lang} states that all elements of $\G (\bar{L})$ are of the form $Y^{-1} Y^{(q)}$.
	
	Thus the induced ring homomorphism $\psi^*: \bar{L}[\G ]\to \bar{L}[\G ]$ is injective. The announced result follows trivially from this. 
\end{proof}
\begin{lemma}
	\label{L:especial} Assume that $L$ is an infinite field. Let $p\in L[\b{t},1/d]$ be a nonzero rational function and let $B$ be an element of $\G (L)$. Then there exists $\bb\xi\in L^n$ such that $p(\bb\xi)\ne 0$ and $A(\bb\xi)$ is Frobenius-equivalent to $B$ in $\G (L)$. 
\end{lemma}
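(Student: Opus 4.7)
The plan is to combine Lemma~\ref{L:lang2} (the morphism $\psi_B:\G\to\G$, $X\mapsto X^{-1}BX^{(q)}$, is an epimorphism of $L$-varieties) with the density of $L$-rational points over an infinite field.

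The basis $v_1,\ldots,v_m$ identifies $\A$ with $m$-dimensional affine space over $\F_q$, so that $\G=\G_m(\A)$ is the principal open subvariety cut out by the nonvanishing of $d=\det A(\b{t})$; its coordinate ring is $L[\b{t},1/d]$, and the map $\bb\eta\mapsto A(\bb\eta)$ is a bijection between $\{\bb\eta\in L^m:d(\bb\eta)\ne 0\}$ and $\G(L)$. Under this identification, $p\in L[\b{t},1/d]$ is a nonzero regular function on $\G$.

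Next I would pull $p$ back along $\psi_B$. By Lemma~\ref{L:lang2}, the comorphism $\psi_B^*:L[\b{t},1/d]\to L[\b{t},1/d]$ is injective, so $p\circ\psi_B$ is a nonzero element of $L[\b{t},1/d]$. Writing $p\circ\psi_B=h/d^k$ with $h\in L[\b{t}]$ nonzero, the polynomial $hd\in L[\b{t}]$ is nonzero, and since $L$ is infinite, a standard argument produces $\bb\eta\in L^m$ with $(hd)(\bb\eta)\ne 0$. Setting $N:=A(\bb\eta)\in\G(L)$, we have $(p\circ\psi_B)(N)\ne 0$. Now let $\bb\xi\in L^m$ be the coordinate vector of $C:=N^{-1}BN^{(q)}\in\G(L)$ in the basis $v_1,\ldots,v_m$; then $A(\bb\xi)=N^{-1}BN^{(q)}$ is Frobenius-equivalent to $B$ via $N\in\G(L)$, and $p(\bb\xi)=(p\circ\psi_B)(N)\ne 0$.

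The main point to flag is conceptual rather than computational: Lemma~\ref{L:lang2} gives injectivity of $\psi_B^*$ rather than surjectivity of $\psi_B$ on $L$-rational points, so one cannot simply ``solve'' for $N$ with $\psi_B(N)$ prescribed. Instead we vary $N$ freely over $\G(L)$ and use the density of $\G(L)$ in $\G$---which is where infinitude of $L$ is essential---to dodge the vanishing locus of $p$.
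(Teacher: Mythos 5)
Your proposal is correct and follows essentially the same route as the paper: both pull $p$ back along the map $X\mapsto X^{-1}BX^{(q)}$, invoke Lemma~\ref{L:lang2} for the nonvanishing of the pullback, and then use that a nonzero polynomial over an infinite field has a nonvanishing $L$-point to produce the conjugating element $N=A(\bb\eta)\in\G(L)$. The paper phrases this by transporting everything through the isomorphism $\alpha:O\to\G$ and working with $\varphi=\alpha^{-1}\circ\psi\circ\alpha$, but this is only a notational difference; your closing remark correctly identifies the real content, namely that one chooses $N$ (not $\bb\xi$) freely and lets $\bb\xi$ be determined by $N^{-1}BN^{(q)}$.
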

\begin{proof}
	Let $O\subset \mathbb{A}^m$ be the open subset where $d\ne 0$. Then the the map $\alpha: O\to \G $ given by $\alpha(\bb\xi)=A(\bb\xi)$ is an isomorphism of affine varieties defined over $L$. Define $\varphi=\alpha^{-1}\circ\psi\circ \alpha$. By Lemma \ref{L:lang2}, $ \varphi^*(p)=p\circ \varphi$ is not zero. Since $L$ is infinite, there exists $\bb\eta\in O(L)\subset L^m$ such that $p(\varphi(\bb\eta))\ne 0$. Let $\bb\xi=\varphi(\bb\eta)$. Then $A(\bb\xi)=\alpha(\bb\xi)=\alpha(\varphi(\bb\eta))=\psi(\alpha(\bb\eta))=\alpha(\bb\eta) ^{-1} B \alpha(\bb\eta)^{(q)}$. 
\end{proof}
\begin{theorem}
	\label{T:main-pol} Let $f(Y;\b{t})\in \F_q(\b{t})[Y]$ be the polynomial obtained from $A(\b{t})$ as in Corollary \ref{spA}. Then $f(Y;\b{t})$ is $\G(\F_q)$-generic over any infinite field $k$ containing $\F_q$. 
	\begin{proof}
		Let $K=\F_q(\b{t})$ and let $E/K$ be the splitting field of the Frobenius module $(K^n, \varphi_{A\b{t})})$. By Corollary \ref{spA}, $E$ is also the splitting field of $f(Y;\b{t})$. We already know by Theorem \ref{T:main} that $\Gal(E/K)\simeq \G (\F_q)$. Thus we need only to show that $f(Y;\b{t})$ is generic.
		
		As in \eqref{E:change-basis}, there exists $N\in \GL_n(K)$ such that 
		\begin{equation}
			\label{E:equiv} N^{-1}A N^{(q)} = \Delta. 
		\end{equation}
		By choosing a cyclic basis $b\in R^n$ (where $R=\F_q[\b{t},1/d]$ as in Section \ref{S:generic-pols}), we can assume that $N$ has coefficients in $R$. Let $p(\b{t})=\det N$.
		
		Let $M/L$ be a $\G(\F_q)$-extension, where $L$ is an infinite field containing $\F_q$. Choose an isomorphism $\rho: \Gal(M/L)\overset{\simeq}{\to} \G(\F_q)$. We view $\rho$ as a $1$-cocycle with values in $\G(M)$. By the general Hilbert's Theorem 90 \cite[Chap. X]{Serre:1968zr}, there exists $W\in \G(M)$ such that $\rho(\sigma)=W^{-1}\sigma(W)$ for $\sigma\in \Gal(M/L)$. Define $B=W {W^{(q)}}^{-1}$. An elementary verification shows that $B$ is fixed under $\Gal(M/L)$ and therefore lies in $\G(L)$. It is also easy to see that $M$ is the splitting field of the system $B X^{(q)}=X$. By Lemma \ref{L:especial}, there exists $\bb\xi\in L^n$ such that $p(\bb\xi)\ne 0$ and $B':=A(\bb\xi)$ is Frobenius-equivalent to $B$. Since $\bb\xi$ has been chosen so that $N(\bb\xi)$ is nonsingular (recall that $p(\b{t})=\det N$), we can evaluate \eqref{E:equiv} at $\b{t}=\bb\xi$. We get 
		\begin{equation}
			N(\bb\xi)^{-1}B' N(\bb\xi)^{(q)} = \Delta(\bb\xi). 
		\end{equation}
		We conclude by Corollary \ref{spA} that $M$ is the splitting field of $f(Y;\bb\xi)$ over $L$. 
	\end{proof}
\end{theorem}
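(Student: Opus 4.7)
The plan is to leverage the machinery already built: by Corollary~\ref{spA}, the splitting field of $f(Y;\b t)$ equals the splitting field $E$ of the Frobenius module $(K^n,\varphi_{A(\b t)})$ over $K=\F_q(\b t)$, so the Galois group computation is already done. Specifically, part~(1) of Theorem~\ref{T:main} identifies $\Gal(E/K)$ with $\G(\F_q)$, which handles the first clause of the definition of generic polynomial. The rest of the proof is devoted to the specialization condition.

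For the genericity clause, I would start with an arbitrary $\G(\F_q)$-Galois extension $M/L$ where $L\supset k$ is an infinite field. The standard Hilbert~90 trick (exactly as in the proof of Theorem~\ref{T:main}(2)) produces $W\in \G(M)$ with $\rho(\sigma)=W^{-1}\sigma(W)$ representing a chosen isomorphism $\rho:\Gal(M/L)\overset{\sim}{\to}\G(\F_q)$, and then $B:=W(W^{(q)})^{-1}$ lies in $\G(L)$ and $M$ is the splitting field of $BX^{(q)}=X$. The task then reduces to finding a specialization $\bb\xi\in L^n$ with $A(\bb\xi)$ Frobenius-equivalent to $B$, because Frobenius-equivalent matrices define Frobenius modules with the same splitting field.

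The key technical step is to actually reach the polynomial $f(Y;\bb\xi)$ from $A(\bb\xi)$. Since the construction of $f$ from $A(\b t)$ proceeds by passing through a cyclic basis, i.e.\ through a matrix $N\in \GL_n(K)$ satisfying $N^{-1}A(\b t)N^{(q)}=\Delta$, I will first clear denominators to arrange $N\in M_n(R)$ where $R=\F_q[\b t,1/d]$. Then specializing at $\bb\xi$ only makes sense on the open set where $p(\b t):=\det N(\b t)$ is nonzero. Here is where Lemma~\ref{L:especial} is essential: it precisely guarantees that, for the infinite field $L$, we can choose $\bb\xi\in L^n$ with $p(\bb\xi)\ne 0$ and simultaneously $A(\bb\xi)$ Frobenius-equivalent to $B$. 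The main obstacle, conceptually, is this simultaneous choice, and Lemma~\ref{L:especial} is designed to handle it.

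Once $\bb\xi$ is chosen, the plan is to specialize the identity $N^{-1}A(\b t)N^{(q)}=\Delta$ at $\b t=\bb\xi$, yielding
\[
N(\bb\xi)^{-1}\,A(\bb\xi)\,N(\bb\xi)^{(q)}=\Delta(\bb\xi),
\]
which exhibits $A(\bb\xi)$, and therefore $B$, as Frobenius-equivalent to $\Delta(\bb\xi)$. By Corollary~\ref{spA} applied to $\Delta(\bb\xi)$, the splitting field of $B X^{(q)}=X$ coincides with the splitting field of $f(Y;\bb\xi)$; but the former is $M$. This gives $M$ as the splitting field of $f(Y;\bb\xi)$ and completes the argument.
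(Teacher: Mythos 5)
Your proposal is correct and follows essentially the same route as the paper: Theorem~\ref{T:main}(1) for the Galois group, the Hilbert~90 argument producing $W$ and $B\in\G(L)$, Lemma~\ref{L:especial} to simultaneously make $\det N(\bb\xi)\ne 0$ and $A(\bb\xi)$ Frobenius-equivalent to $B$, and then specializing the cyclic-basis identity and invoking Corollary~\ref{spA}. No gaps.
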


\section{Examples} In this section, we give specific examples of generic polynomials. 
\begin{example}
	Let $\A =\F_9$ be seen as finite-dimensional algebra over $\F_3$. Then $G=\A^\times \simeq C_8$.\\
	Taking the basis $\{1,\sqrt{-1}\}$ of $\A$ over $\F_3$, we can embed $\A$ into $M_2(\F_3)$ via the regular representation. Then the matrix $A$ of \eqref{E:main-matrix} is given by
	\[ A(\b{t})= 
	\begin{pmatrix}
		t_1 & -t_2 \\t_2 & t_1\ 
	\end{pmatrix}
	. \]
	Let $v=(0,1)^T \in \F_3 ^2$ serve as the generator for the cyclic module. Then as in the last section,
	\[ N = (v|Av^{(3)}) = 
	\begin{pmatrix}
		1 & t_1\\
		0&t_2 
	\end{pmatrix}
	. \]
	Clearly $N$ is non-singular. Let
	\[ \Delta =N^{-1}AN^{(3)} = 
	\begin{pmatrix}
		0 & - t_2^2 \left(t_1^2+t_2^2\right) \\
		1 & t_1 \left(t_1^2+t_2^2\right) 
	\end{pmatrix}
	. \]
	By Theorem \ref{T:main-pol}, the additive polynomial $f$ below build with the coefficients of the last column of $\Delta$ is generic for the group $C_8$ over any infinite field of characteristic $3$. 
	
	\[ f(Y;\b{t}) = t_2^2 \left(t_1^2+t_2^2\right)Y-t_1\left(t_1^2+t_2^2\right)Y^3+Y^9. \]
	
	This computation generalizes easily for any odd prime $p$. An additive generic polynomial for $C_{p^2-1}$ in characteristic $p$ is
	
	\[ f(Y;\b{t}) = t_2^{p-1}\left(t_1^2-\varepsilon t_2^2\right)Y-t_1\left(t_1^{p-1}+t_2^{p-1}\right)Y^p+Y^{p^2}, \]
	
	where $\varepsilon\in \F_p^\times$ is a nonsquare. 
\end{example}
\begin{example}
	Consider the following matrices $\GL_3(\F_2)$:
	\[a = 
	\begin{pmatrix}
		1&1&0 \\
		0&1&0\\
		0&0&1 
	\end{pmatrix}
	,\quad b = 
	\begin{pmatrix}
		1&0&1 \\
		0&1&0\\
		0&0&1 
	\end{pmatrix}
	,\quad c = 
	\begin{pmatrix}
		1&1&0 \\
		0&1&1\\
		0&1&0 
	\end{pmatrix}
	. \]
	It is easily verified that they generate a subgroup isomorphic to $A_4$, the alternating group on four elements.
	
	Let $\A$ be the subalgebra generated by $a$, $b$ and $c$ in $M_3(\F_2)$. We verify readily that $\dim_{\F_2} (\A) =5$ and $|\A^\times| = 12$. Thus $\A^\times\simeq A_4$. After choosing a basis of $\A$, we obtain a matrix in $5$ parameters
	\[ A(\b{t})= 
	\begin{pmatrix}
		t_1+t_2+t_3+t_4+t_5 & t_2 &t_3+t_4 \\
		0 & t_1+t_2+t_4+t_5 &t_2+t_3+t_5\\
		0 & t_2+t_3+t_5 &t_1+t_3+t_4\\
	\end{pmatrix}
	. \]
	As in the last section, we choose a generator for the associated Frobenius module. Let $v = (1,0,1)^T \in \F_2 ^3$. The matrix
	\[ N = (v|Av^{(2)}|AA^{(2)}v^{(4)}). \]
	is nonsingular, so $v$ is indeed a generator.
	
	As before, we compute $\Delta = N^{-1}AN^{(2)}$. Recall that the entries in the last column of $\Delta$ are the coefficients of an additive generic polynomial $f(Y;\b{t})$ of degree $8$ for $\A^\times\simeq A_4$ by Theorem \ref{T:main-pol}. We exhibit below an irreducible factor $g$ of $f(Y;\b{t})$ of degree $4$. Since no proper quotient of $A_4$ can act transitively on $4$ elements, the Galois group of $g$ over $\F_2(\b{t})$ is $A_4$. Obviously $g$ is also generic.
	\[ 
	\begin{aligned}
		g=&Y^4+(t_1^2+t_1t_2+t_2^2+t_1t_3+t_2t_3+t_3^2+t_2t_4+t_3t_4+t_4^2+t_1t_5+t_3t_5+\\
		&t_4t_5+t_5^2)Y^2+(t_1^2t_2+t_1t_2^2+t_2^3+t_1^2t_3+t_1t_3^2+t_3^3+t_2^2t_4+t_3^2t_4+\\
		&t_2t_4^2+t_3t_4^2+t_1^2t_5+t_2^2t_5+t_4^2t_5+t_1t_5^2+t_2t_5^2+t_4t_5^2+t_5^3)Y+\\
		&(t_1^2t_2t_4+t_2^3t_4+t_1^2t_3t_4+t_1t_2t_3t_4+t_1t_3^2t_4+t_2t_3^2t_4+t_1^2t_4^2+\\
		&t_2^2t_4^2+t_2t_3t_4^2+t_2t_4^3+t_3t_4^3+t_4^4+t_1t_2^2t_5+t_2^3t_5+t_2^2t_3t_5+\\
		&t_1t_3^2t_5+t_2t_3^2t_5+t_3^3t_5+t_1^2t_4t_5+t_1t_3t_4t_5+t_3t_4^2t_5+t_4^3t_5+\\
		&t_2^2t_5^2+t_3^2t_5^2+t_2t_4t_5^2+t_4^2t_5^2+t_1t_5^3+t_2t_5^3+t_3t_5^3+t_5^4). 
	\end{aligned}
	\]
	While this method always produces $\A^*$-generic polynomials, the number of parameters is not optimal. A generic polynomial with two parameters was obtained in \cite{Sergeev:2006fk} for $A_4$, compared to the five parameters that this method needed. 
	
	The function field in one variable $\F_2(s)$ is Hilbertian, so ``most'' specializations of $g$ in $\F_2(s)$ are irreducible and have Galois group $A_4$. Here are some examples.
	\[ 
	\begin{aligned}
		g_1&=s + Y + Y^2 + Y^4;\\
		g_2&=s^2 + s^3Y + s^2Y^2 + Y^4.\\
	\end{aligned}
	\]
\end{example}

\bibliographystyle{plain}

\end{document}